\documentclass[12pt]{amsart}
\usepackage{times, pdfpages, graphicx}
\usepackage{comment}
\usepackage{cancel}
\usepackage{xcolor}

\usepackage{exscale}
\usepackage{epsfig}


\usepackage{amsthm}
\usepackage{amsfonts}

\usepackage{graphics,psfrag,graphicx,subfigure,epsfig, xypic}
\usepackage{amsmath,amsfonts,amssymb,amstext,amsthm,amscd,mathrsfs}
\usepackage[english]{babel}
\usepackage[all]{xy}
\usepackage{multicol}
\usepackage{rotating}


\usepackage{amsmath, amsfonts, amssymb, amscd, enumerate}
\usepackage{color}
\tolerance=10000
\sloppy

\usepackage[all]{xy}

\theoremstyle{plain}
 \newtheorem{theo}{Theorem}[section]

\theoremstyle{plain}

\newtheorem{proposition}[theo]{Proposition}
\newtheorem{definition}[theo]{Definition}
\newtheorem{remark}[theo]{Remark}


\newcommand{\beq}{\begin{equation}}
\newcommand{\eeq}{\end{equation}}

\newcommand{\Arg}{\textnormal{Arg}}

\newcommand{\Oc}{\mathbb O}


\newcommand{\C}{\mathbb{C}}

\newcommand{\R}{\mathbb{R}}
\renewcommand{\H}{\mathbb{H}}
\newcommand{\Z}{\mathbb{Z}}

\newcommand{\bK}{\mathbb{K}}

\newcommand{\bS}{\mathbb{S}}










\newcommand{\scrE}{\mathscr E}

\newcommand{\SK}{\mathbb S_{\mathbb K}}



\newcommand{\ra}{\rightarrow}

\renewcommand{\square}{\kern1pt\vbox
{\hrule height 0.6pt\hbox{\vrule width 0.6pt\hskip 3pt
\vbox{\vskip 6pt}\hskip 3pt\vrule width 0.6pt}\hrule height0.6pt}\kern1pt}

\renewcommand\Re{\operatorname{Re}}
\renewcommand\Im{\operatorname{Im}}

\renewcommand{\Re}{{\rm Re}}
\renewcommand{\Im}{{\rm Im}}

\newcommand{\be}{\begin{equation}}
\newcommand{\ee}{\end{equation}}

\def\<#1,#2>{\langle\,#1,\,#2\,\rangle}
\newcommand{\arr}{\begin{array}{rlll}}
\newcommand{\ea}{\end{array}}
\newcommand{\bea}{\begin{eqnarray}}
\newcommand{\eea}{\end{eqnarray}}
\newcommand{\bean}{\begin{eqnarray*}}
\newcommand{\eean}{\end{eqnarray*}}

\def\sideremark#1{\ifvmode\leavevmode\fi\vadjust{
\vbox to0pt{\hbox to 0pt{\hskip\hsize\hskip1em
\vbox{\hsize3cm\tiny\raggedright\pretolerance10000
\noindent #1\hfill}\hss}\vbox to8pt{\vfil}\vss}}}

%
\newcounter{ssig}
\setcounter{ssig}{0}

\newcounter{ttig}
\setcounter{ttig}{0}

\begin{document}

\title[Slice conformality and Riemann manifolds...]{Slice conformality and Riemann manifolds on quaternions and octonions}

\author{Graziano Gentili}
\address{DiMaI, Universit\`a di Firenze, Viale Morgagni 67/A,\ Firenze, Italy}
\email { graziano.gentili@unifi.it }
\author{Jasna Prezelj}
\address{Fakulteta za matematiko in fiziko Jadranska 19 1000
  Ljubljana, Slovenija,
  UP FAMNIT, Glagolja\v ska 8, Koper Slovenija,
IMFM Jadranska 19 1000
  Ljubljana, Slovenija
}

\email { jasna.prezelj@fmf.uni-lj.si}
\author {Fabio  Vlacci}\address{DiSPeS Universit\`a di Trieste Piazzale Europa 1,\ Trieste,
  Italy} \email{ fvlacci@units.it}
\thanks{\rm
  The first and third authors were partly supported by INdAM, through: GNSAGA; INdAM project ``Hypercomplex function theory and applications''.
  The first author was also partially supported by MIUR, through the projects: Finanziamento Premiale FOE 2014 ``Splines for accUrate NumeRics: adaptIve models for Simulation Environments''.
  The second author
  was partially supported by research program P1-0291 and by research
  projects J1-7256 and J1-9104 at Slovenian Research Agency.
The third author was also partially supported by PRIN 2017 ``Real and complex manifolds: topology, geometry and holomorphic dynamics'.
}

\makeatletter
\@namedef{subjclassname@2020}{\textup{2020} Mathematics Subject Classification}
\makeatother
\begin{abstract}

In this paper we establish  quaternionic and octonionic analogs of the classical
Riemann surfaces. The construction
of these manifolds has nice peculiarities and the scrutiny of Bernhard
Riemann approach to Riemann surfaces, mainly based on conformality,
leads to the definition of slice conformal or slice isothermal
parameterization  of  quaternionic or octonionic Riemann manifolds. These
new classes of manifolds include slice regular quaternionic and
octonionic curves, graphs of slice regular functions, the $4$ and $8$
dimensional spheres, the helicoidal and catenoidal $4$ and $8$
dimensional manifolds.
Using appropriate   Riemann manifolds, we also give a unified definition of the quaternionic and octonionic logarithm and $n$-th root function.

\end{abstract}

\keywords{Slice regular functions, Conformal mappings, Riemann surfaces}
\subjclass[2020]{30G35;  30C35; 30F99}

\maketitle

\section{Preface}
  The initial project originating this paper was
giving a well structured and unifying definition of the logarithm and
$n$-th root functions in the quaternionic and octonionic settings.
To this purpose, our first aim was to
  construct the quaternionic and octonionic analogs of the well known
  Riemann surface of the complex logarithm, which in the complex
  setting allows a complete understanding of this function and of its
  branches.

Indeed, the manifolds constructed with this aim revealed new, interesting and peculiar features, so that  they captured the central position among the results of this paper.

We will illustrate how the project of this paper developed
 and, to begin with, point out that for the case of the principal branch of the logarithm,
  definitions were already given in the general setting of Clifford
  Algebras - see, e.g., \cite[Definition 11.24, p. 231]{Gurl} - and
  also specialized to the case of quaternions - see, e.g.,
  \cite[Definition 3.4]{Gen-Vig}.   \\

  Let $\mathbb{K}$ be either the division algebra of quaternions $\H$ or the division algebra of octonions
 $\Oc$; we  denote by $\dim \mathbb{K}$ the {\em real} dimension of $\mathbb{K}$, namely
$\dim \mathbb{H}=4$  and $\dim \mathbb{O}=8$.
 Let $\SK \subset \mathbb K$ be the $2$-sphere or,
 respectively, the $6$-sphere of imaginary units, i.e. the sets of
 $I\in \mathbb K$ such that $I^2=-1$.  For the sake of simplicity,
 both in the case of quaternions and in the case of octonions we will
 simply write $\bS$ instead of $\SK$ since no confusion can arise. The
 construction of the logarithm and its branches given in the complex
 case cannot be directly replicated in the quaternionic and octonionic
 environments. This is mainly due to the fact that the exponential
 function
\[
\exp q=\sum_{n=1}^\infty \dfrac{q^n}{n!}
\]
is an entire function (i.e., its domain of definition is $ \mathbb K$), but cannot be used to define a covering of $\mathbb K\setminus\{0\}$.  In fact, for all $0 \neq x\in \R$, the preimage of $x$ is not a discrete set but consists of infinitely many $2$ or $6$ dimensional spheres.  Indeed, for instance  in the case $x<0$, setting $\bS (2k+1)\pi=\{q(2k+1)\pi : q\in \bS\}$, we have
\[
(\exp)^{-1}(x)=\{ \log|x| + \bS (2k+1)\pi  : k\in \Z\}.
\]
It follows that, contrarily to what happens in the case of the
 complex logarithm, no continuous branch of the quaternionic or
 octonionic logarithm can be defined on any open neighborhood of any
 strictly negative $x\in \R$. A similar phenomenon happens for all
 strictly positive $x\in \R$, except for the principal branch. To overcome this difficulty, we turn our
 attention to the construction of a $4$-dimensional, respectively
 $8$-dimensional, manifold obtained by blowing-up $\mathbb K$ along
 the real axis, and ``adapting'' it to become a domain of definition
 for the quaternionic or octonionic logarithm. Our natural approach to
 perform this construction passes through the recent theory of slice
 regular functions - see, e.g., the monograph \cite{libroGSS} and
 references therein - and leads to the quaternionic and octonionic helicoidal Riemann manifolds
 (which  are  manifolds in the sense of \cite{GGS})
 inspired by the classical helicoidal surface of the space
 $\R^3$.

These manifolds, constructed with
the purpose specified above, have new, interesting and peculiar features that attracted the attention of the authors
and encouraged them to go back to the scrutiny of Bernhard Riemann
approach to holomorphic functions and Riemann surfaces, which was
mainly based on conformality, as in \cite{Riemann}. All this led to a
deeper appreciation of the work of Riemann, to a nice surprise and to
Definition \ref{slice conformality and Riemann manifolds} of
\emph{slice conformal or slice isothermal parameterization} and of
\emph{hypercomplex Riemann manifold}.

 Indeed, the study of slice conformality and the investigation of quaternionic and octonionic Riemann manifolds became the true main subject of this paper.
\vskip .3cm
Let
$\langle \ , \ \rangle$ denote the standard Euclidean scalar product
in $\R^{\dim\bK}\cong \bK$ and, for any purely imaginary unit $I\in \bS$,
set
$$\C_I^\perp=\{q \in \mathbb K : \langle q, x+Iy\rangle =0, \forall (x+Iy) \in \C_I \}$$ to be the orthogonal space to the slice $\C_I=\R+I\R$.
 A $C^1$ injective $\R^N$-valued immersion $f$ defined on a suitable domain $\Omega$ of $\mathbb K$ is called slice conformal or slice isothermal immersion if, for any purely imaginary unit $I\in \mathbb K$ and any $x,y\in \R$, the differential $df(x+Iy)$  is such that both 
 \[
df(x+Iy)_{|\C_I}
\]
and
\[
df(x+Iy)_{|\C_I^\perp}
\]
are conformal. If this is the case, $f(\Omega)$ is called a hypercomplex Riemann
manifold.

The nice surprise was that the quaternionic and octonionic spheres, the helicoidal  and
 catenoidal manifolds, together with the natural quaternionic and octonionic curves, are all hypercomplex Riemann manifolds.

 The study performed in \cite{Orient-preserv} by Ghiloni and        
  Perotti shows that the Jacobian matrix $J_f$ of a slice regular
  function $f$ is such that $\det(J_f)\ge 0$, i.e that $f$ is
  orientation preserving.  Slice conformality is indeed an
  extension of the definition of slice regularity, even in the case of
  $\mathbb K$-valued, orientation preserving immersions defined on a
  domain $\Omega$ of $\mathbb K$: for a fixed non real quaternion $a$,
  the function $f(q) = aq$ in not slice regular, but it is slice
  conformal (actually conformal) and orientation preserving. The following remark
  is basic to help placing  the results of this paper in the right perspective.\\

\noindent{\bf Remark.}
After recalling that the real differential $df$ of a slice regular
function $f:\Omega \to \mathbb K$ is conformal (if non singular) at all real points of
the slice domain $\Omega$ (see, e.g., \cite[Corollary
  8.17.]{libroGSS}), it is worthwhile noticing that to require that
the differential $df$ is conformal at all points of the domain of
definition $\Omega$ may be too restrictive: by a classical result due
to Liouville, for $n>2$ a conformal map from a domain of $\R^n$ to
$\R^n$ is a M\"obius transformation.\\

\noindent In the paper a \emph{standard set of curves} is applied
to study the real
differential of (smooth enough) injective $\R^N$-valued immersions $f$
defined on suitable domains $\Omega$ of $\mathbb K$ (we point out that similar techniques were already introduced in \cite{Orient-preserv, GPSoctonionic, Gurl}). As a result, the
paper can exhibit a collection of quaternionic and octonionic  Riemann manifolds,
inspired by classical Riemann surfaces, which testify the interest of
the approach.


Sub-manifolds of the helicoidal hypercomplex manifolds, endowed with suitable atlases which define different structures, provide a
 natural environment for the definition of the quaternionic and
 octonionic logarithm, and for their possible branches. Once done
 this, the construction of natural manifolds of
 definition for the $n$-th root quaternionic and octonionic
 functions is an easily doable step.

\vskip .2cm
The paper is organized as follows. After a few preliminaries, which also subsume the approach to slice regular functions based on stem functions, Section \ref{sec:2} is dedicated to the definition and construction of classes
 of hypercomplex Riemann manifolds, including quaternionic and octonionic slice regular curves. This construction is based on Theorem \ref{potential_result}, which studies slice conformal curves in terms of their stem functions, and
calls into play the standard set of curves. In Section \ref{4} we present other explicit examples of quaternionic and octonionic regular curves, which comprise the  hypercomplex Riemann sphere, the helicoidal hypercomplex manifold, the
catenoidal hypercomplex manifold and the study of the relations between
them. Section \ref{sec:log} contains the presentation of
natural manifolds for the definition of the quaternionic and octonionic logarithm. The same section contains the
construction of the manifolds of the $n$-th
root quaternionic and octonionic functions.

The authors are grateful and indebted to the anonymous referee for her/his attentive and accurate report, and for the precious comments and suggestions that helped much to put the paper in the present improved form, and to formulate Theorem \ref{potential_result} at the right degree of generality.

\section{Preliminaries}\label{preliminaries}

  As we said, $\mathbb{K}$ denotes either $\H$ or $\Oc$, i.e.,
 the algebras of quaternions or octonions, and
 $\bS\subset \mathbb{K}$  denotes, respectively, the $2$-sphere or $6$-sphere of imaginary units, i.e. the set of
  $I\in \mathbb K$ such that $I^2=-1$.   Given any non real  $q \in
 \mathbb{K},$ there exist (and are uniquely determined) an imaginary
 unit of $\mathbb K$, and two real numbers $x$ and $y>0$, such that
 $q=x+Iy$. With this notation, the conjugate of $q$ will be $\bar q :=
 x-Iy$ and $|q|^2=q\bar q=\bar q q=x^2+y^2$.
 In both cases, each imaginary unit $I$ generates (as a real algebra) a copy
 of the complex plane denoted by $\mathbb{C}_I$. We call such a complex
 plane a {\em slice}.

Let $\Omega$ be a {\em slice domain} of $\mathbb K$, i.e., an open and
connected subset containing real points and such $\Omega_I=\Omega\cap
\C_I$ is a domain of $\C_I$ for all imaginary units $I\in \bS \subset \mathbb K$.  The set of slice regular functions on $\Omega$ is
defined using a family of Cauchy-Riemann operators (see
e.g. \cite{libroGSS, cayley}).

\begin{definition}\label{sliceregular}
 Let $\Omega\subseteq \mathbb K$ be a slice domain and let $f: \Omega \to \mathbb K$ be a function.

 If, for an imaginary unit $I$ of $\mathbb K$, the restriction
  $f_I := f_{|_{\Omega_I}}$
  has continuous partial derivatives and
\begin{equation}\label{sr}
\bar \partial_I f(x+yI) := \frac{1}{2} \left( \frac{\partial}{\partial x} + I \frac{\partial}{\partial y} \right) f_I(x+yI) \equiv 0
\end{equation}
then $f_I$ is called \emph{holomorphic}. If $f_I$ is holomorphic for all imaginary units of $\mathbb K$, then
the function $f$ is called \emph{slice regular}.

  If $f$ is a slice regular function, then
  the {\em Cullen} or {\em slice derivative of} $f$ is defined as
  \[f'_c(x+Iy) = \frac{1}{2} \left( \frac{\partial}{\partial x} - I \frac{\partial}{\partial y} \right) f_I(x+yI) .\]

\end{definition}
 It turns out that $f'_c$ is a slice regular function
(see \cite{libroGSS})
  and from (\ref{sr}) one easily obtains that $f'_c=\dfrac{\partial f}{\partial x}$.

The property of being holomorphic along the slices $\Omega_I$ for all
imaginary units $I$ of $\mathbb K$, forces slice regular functions to
be affine along entire regions of each sphere of type $x+\bS y$.

In
fact, the local representation formula for quaternionic slice regular
functions on slice domains (see, e.g., \cite{local, slice domains}),
states that, if $L,M,N \in \bS$, with $M \neq N$, are such that
$x+Ly,x+My,x+Ny$ belong to a suitable open neighborhood $U$ of $x+Iy$ in
the $2$-sphere $x+\bS y$, then the local representation formula
\begin{eqnarray}\label{local rep formula}
f(x+Ly)
&=& (M-N)^{-1} \left[M f(x+My) - N f(x+Ny)\right]+\\
&+&L (M-N)^{-1} \left[f(x+My) - f(x+Ny)\right]\nonumber
\end{eqnarray}
holds
 and, for
 $y \ne 0$, the \emph{spherical derivative}  of $f$ is defined by
\begin{equation}\label{spherical}
f'_s(x+Iy):=y^{-1}(M-N)^{-1} \left[f(x+My) - f(x+Ny)\right].
\end{equation}
Moreover, $f'_s$ is constant in the same neighborhood $U$ of $x+Iy$ in
$x+\bS y$ (see, e.g., \cite[Definition 3.1]{slice domains}).  The
analog of this representation formula holds for octonionic slice
regular functions as well (see, e.g., \cite[Proposition 6]{Ghil-Per},
\cite[Formula (5)]{Ghil-Per-Stop}).  A subclass of the class of slice
regular functions on a slice domain $\Omega\subseteq \bK$ particularly
resembles the class of holomorphic functions of one complex variable.
These functions are defined as follows: a slice regular function $f:
\Omega \to \mathbb K$ is said to be {\em slice preserving} if, and
only if, for all imaginary units $I$ of $\bS \subset \mathbb{K}$, we
have that $f(\Omega_I)\subseteq \mathbb{C}_I$, (see \cite{cayley} for
the case of octonions).

In a while   we will
make use of the notion of stem  function, which was defined by Ghiloni-Perotti in \cite{Ghil-Per}, on a class of subsets of the complex plane $\C$.
\begin{definition}\label{sets}
    A subset $D$ of \ $\C=\R+i\R$ is said to be {\em symmetric} (in $\C$) if $\overline D=\{\overline z : z \in D\}$ coincides with $D$. The
 \emph{(axial) symmetrization} $\widetilde E$ of a subset $E$ of $\bK$ is defined by 
 $$\widetilde{E} = \{ x + I y:  x,y \in \R, I\in \bS, (x + \bS y) \cap E \neq \varnothing \}.$$  
 A subset $\Omega$ of $\bK$ is called
 {\em (axially) symmetric} (in $\bK$) if $\widetilde{\Omega} = \Omega.$
\end{definition}
 The following definition was given in \cite{Ghil-Per} in the general
 case of a real alternative algebra  endowed with an
 anti-involution (or $\mathbb{R}$). For the purpose of this paper, and for the sake of
 simplicity, we will restrict to the cases $\R, \H, \Oc$.

  \begin{definition}\label{stem}
  Let $A$ denote either $\R$ or $\bK$ and let  $A_{\C}: = A\otimes_{\mathbb{R}} \mathbb{C}$  be the complexification of $A$. Let us adopt the usual representation $$A_{\C}=\{ x+\iota y : x,y \in A\}$$ where $\iota^2=-1$.
Consider a symmetric domain $D$ of $\C$.

If a function $F : D \ra A_{\C}$ is \emph{complex intrinsic}, that is
if $F( z)=\overline{F(\overline z)}$ for all $z\in D$, then $F$ is
called an {\em $A$-stem function} (or {\em stem function}) on $D$.

If $F:D \to A_{\C}$ is a stem function expressed by
$$F(z)=F_1(z)+\iota F_2(z)$$
then the function $f: \widetilde D \to A$
$$f(x + Iy) = F_1(x + iy) + I F_2(x + iy)$$ is called the
 {\em slice function} induced by $F$.

\end{definition}

Slice regular functions on symmetric slice domains can all be induced by stem functions, as the following result states (see, e.g., \cite{Ghil-Per}).

\begin{proposition}
If a slice domain $\Omega$ in  $\mathbb{K}$
  is axially symmetric,
then any slice regular
function $f:\Omega \to \bK$ is induced by a holomorphic stem  function $F : D =
\Omega_i \ra \bK_{\C}$.
\end{proposition}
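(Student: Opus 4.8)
The plan is to reconstruct the inducing pair $(F_1,F_2)$ explicitly from the values of $f$ on the single slice $\C_i$, and then to verify in turn that $F=F_1+\iota F_2$ is complex intrinsic and holomorphic. For $z=x+iy\in D=\Omega_i$ I would set
\[
F_1(z):=\tfrac12\big(f(x+iy)+f(x-iy)\big),\qquad
F_2(z):=-\tfrac{i}{2}\big(f(x+iy)-f(x-iy)\big),
\]
which are well-defined $\bK$-valued functions on $D$, since the axial symmetry of $\Omega$ forces $D$ to be symmetric in $\C_i$, so that $x+iy$ and $x-iy$ lie in $\Omega_i$ together. These are just the spherical value and a rescaling of the spherical derivative of $f$, read off on $\C_i$, and they inherit continuous partial derivatives from $f_i$.

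The first step is to check that the slice function induced by $F$ coincides with $f$. On $\C_i$ itself this is immediate: $F_1(z)+iF_2(z)=f(x+iy)$ once one simplifies $i(-i)v=-(ii)v=v$, an identity valid in $\Oc$ as well by alternativity. For an arbitrary unit $I$ I would apply the representation formula \eqref{local rep formula} with $M=i$ and $N=-i$: substituting $(2i)^{-1}=-\tfrac{i}{2}$ turns its two summands precisely into $F_1(x+iy)$ and $I\,F_2(x+iy)$, giving $f(x+Iy)=F_1(x+iy)+I\,F_2(x+iy)$. To apply this on the whole sphere $x+\bS y\subseteq\Omega$ rather than on a single neighborhood, I would use that the spherical derivative \eqref{spherical} is locally constant on $x+\bS y$ and that the $2$- or $6$-sphere is connected, so that the local formula propagates to the entire sphere.

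For complex intrinsicity, replacing $y$ by $-y$ in the two formulas gives $F_1(\bar z)=F_1(z)$ and $F_2(\bar z)=-F_2(z)$, hence $F(\bar z)=F_1(z)-\iota F_2(z)=\overline{F(z)}$ for the conjugation of $\bK_\C=\bK\otimes_\R\C$ fixing $\bK$ and sending $\iota\mapsto-\iota$. Holomorphicity, i.e. $\bar\partial F=0$, amounts to $\partial_x F_1=\partial_y F_2$ and $\partial_x F_2=-\partial_y F_1$. Writing $g(z)=f_i(z)$ and $h(z)=f_i(\bar z)$, the holomorphy of $f_i$ given by \eqref{sr} reads $\partial_x g=-i\,\partial_y g$, while the chain rule gives $\partial_x h=i\,\partial_y h$, where $i$ acts by left multiplication. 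Feeding these into the derivatives of $F_1=\tfrac12(g+h)$ and $F_2=-\tfrac{i}{2}(g-h)$ and reducing the resulting double left multiplications via $i(iv)=(ii)v=-v$ produces the two Cauchy--Riemann relations simultaneously.

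The step I expect to cause the most friction is the holomorphicity computation together with the globalization of the representation formula. Every manipulation rests on left multiplication by the fixed unit $i$, and in the octonionic case the algebra is non-associative, so the identities $i(-iv)=v$ and $i(iv)=-v$ must be justified by the left-alternative law rather than assumed; likewise the passage from the local representation formula to its validity on the full $2$- or $6$-sphere has to be argued from connectedness and from the local constancy of the spherical derivative. The transfer of $C^1$-regularity from $f_i$ to $F_1,F_2$, by contrast, is routine.
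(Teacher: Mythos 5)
The paper does not prove this proposition at all: it is quoted as a known result with a pointer to Ghiloni--Perotti \cite{Ghil-Per}, so there is no in-paper argument to compare against. Your reconstruction is correct and is essentially the canonical proof from that reference: recover $F_1,F_2$ as the spherical value and (rescaled) spherical derivative read on the slice $\C_i$, globalize the representation formula over each sphere $x+\bS y$ by the open--closed/connectedness argument, and check intrinsicity and the Cauchy--Riemann relations using only left multiplication by $i$, justified in $\Oc$ by the left-alternative law. No gaps.
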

As we have seen, stem functions can be defined in symmetric open
subsets $E$ of $\C$ that do not necessarily intersect the real
axis. As a consequence, holomorphic stem functions induce special
slice functions, still called \emph{slice regular functions}, defined
on symmetric domains $\widetilde E$ of $\bK$ which do not necessarily
intersect the real axis, so generalizing the initial notion of slice
regularity to the class of so called \emph{product
  domains} (see e.g. \cite{GPS-TAMS} for the terminology and the seminal paper on stem functions \cite{Ghil-Per}).


In this paper we will be mainly concerned with slice regular functions
defined on slice domains of $\bK$, which  in principle can be dealt with avoiding
reference to stem functions. However, by admitting on the stage the
point of view of stem functions, some results may be easily extended to
the case of product domains; moreover, the generation of slice regular
functions through holomorphic stem functions is exactly the same for
the case of quaternions and octonions, and hence such an approach has
the advantage to provide a natural unified vision in the two different
environments, thus simplifying technicalities and
presentation.

\begin{remark}\emph{
\noindent With reference to the notations of Definition \ref{stem},  the following facts have been proven (see, e.g., \cite{Ghil-Per}):
\begin{enumerate}[(a)]
\item If $F:D \to A_{\C}$ is expressed by $F(z)=F_1(z)+\iota F_2(z)$, with $F_j:D\ra A$ for $j=1,2$, then
$F$ is complex intrinsic if and only if
\begin{equation}\label{intrinsic}
F_1(z)=F_1(\overline z)\quad \mbox{ and}\quad F_2(z)=-F_2(\overline z)\ \forall z\in D.
 \end{equation}
\item If we take $A = \R$ then the  slice function $f$ induced by the stem function $F$
  is a slice preserving
function (i.e. $f(\widetilde D_I) \subset \C_I$ $\forall I \in \bS$).
\item The local representation formula (\ref{local rep
  formula}) holds, by definition, for slice functions.
\item  If $F=F_1+\iota F_2$
        is a holomorphic stem function which induces the slice regular function $f$, then its (complex) derivative
        $F'=F_1'+\iota F_2'$ is also a holomorphic stem function
which induces the slice derivative $f'_c$ of $f$.
\item If $f$ is a slice function generated by the stem function $F$, then for $y \ne 0$,
  \begin{equation}\label{sph_d} f'_s(x+Iy)=y^{-1} F_2(x+iy).
  \end{equation}
\end{enumerate}
}

\end{remark}
For most of the remaining properties of slice regular functions that
will be directly used in the sequel we will mainly refer the reader to
\cite{libroGSS, cayley}. As for the main applications and developments
of this theory, the reader can consult \cite{angella-bisi, Tori, librodaniele2, GGS2, local, slice domains, camshaft, Gori-Vlacci}, and e.g.
\cite{GPS} for generalizations.

\section{Parameterized quaternionic and octonionic Riemann manifolds  }\label{sec:2}
Following the case of classical parameterized surfaces and
parameterized Riemann surfaces in $\R^N$, we will give  new
definitions, useful in the quaternionc and octonionic settings of slice regular functions. As customary, a differentiable map will be called \emph{an immersion} if its differential is injective at all points of the domain of definition.

\begin{definition} Let $n,N$ be natural numbers with $N\geq n$ and let $\Omega$ be a domain in $\R^n$. A $C^1$ immersion
  \[f:    \Omega  \to \R^N\]
will be called a \emph{conformal or isothermal map} if the matrix of the differential of $f$ is conformal, i.e., if it satisfies
\[
^tdf(x)df(x)=k(x)I_n
\]
for a (never vanishing $C^0$) function $k: \Omega\to \R$
\end{definition}

Recall that, if $\langle \ , \ \rangle$ denotes the Euclidean scalar product in
$\R^{\dim\bK}\cong \bK$,  then for $I\in \bS$ the symbol $\C_I^\perp$ will denote the orthogonal complement of the
slice $\C_I$.

\begin{definition}\label{slice conformality and Riemann manifolds}
Let $\Omega$ be a slice domain in 
 $\bK\cong \R^{\dim_{} \bK}$
and let $N\geq \dim_{} \bK$ be a natural number. 
Let $f:   \Omega  \to \R^N$ be a $C^1$  map (immersion).
If for  all $I\in \bS$ and all $x,y\in \R$  the differential $df(x+Iy)$ is such that both
\[
df(x+Iy)_{|\C_I}
\]
and
\[
df(x+Iy)_{|\C_I^\perp}
\]
are conformal, then $f$ will be called a \emph{slice conformal or slice isothermal map (immersion)}.

If  $f$ is an injective immersion, then it will be called a \emph{slice
  conformal or slice isothermal parameterization} and the
parameterized manifold $f(\Omega)$ in $\R^N$ will be called
a  \emph{(parameterized)  hypercomplex
    Riemann manifold of $\R^N$. In particular, when  $\bK = \H$ we
    refer to it as a quaternionic Riemann manifold and in the case $\bK = \Oc$ as a octonionic
    Riemann manifold.}

In case $f: \Omega \to \R^N$ itself is a conformal parameterization,
then the parameterized  hypercomplex Riemann manifold $f(\Omega)$ in $\R^N$ will be
called a \emph{special (parameterized)  hypercomplex Riemann  manifold} of
$\R^N$.
\end{definition}

The notion of parameterized quaternionic or octonionic Riemann manifold turns out to be quite
natural, as the significant examples that we will present show. To
construct the examples we will need a direct and easy method to
compute the differential of a $C^1$ immersion$f$ defined in
a slice domain $\Omega$ of $\bK\cong \R^{\dim \bK}$ and with values in $\R^N$.

\subsection{The standard set of curves and the case of the differential of a slice regular function}\label{four-curves}

For $I\in \bS$, let us consider a
point $x+Iy\in \C_I\subset \bK$ and choose $L \in\bS$ orthogonal to $I$.
In the same spirit of \cite[proof of Proposition 3.1]{Orient-preserv} and \cite[Proposition 3.1]{GPSoctonionic}, and with a similar purpose, we will use the following set of curves.
For $y\neq 0$ set
\begin{enumerate}
\item the curve $\alpha(t)=(x+t) +Iy$, such that $\alpha(0)=x+Iy$ and $\alpha'(0)=1$;\\
\item the curve $\beta_I(t)=x+I(y+t)$, such that $\beta_I(0)=x+Iy$ and $\beta'(0)=I$;\\
\item the curve $\Gamma_L(t)=x +\gamma(t)y$, where $\gamma(t)$ is an arc of a maximum circle $C_\gamma$ of $\bS$
  such that $\gamma(0)=I$ and that $\gamma'(0)=\frac Ly$; 
  hence $\Gamma_L(0)=x +Iy$ and $\Gamma'_L(0)=L$;
\end{enumerate}
Instead, when $y=0$ and  so
$x+Iy = x$,
the first curve   is
\begin{enumerate}
\item $\alpha(t)=x+t$, such that $\alpha(0)=x$ and $\alpha'(0)=1;$
\end{enumerate}
and the second two  coherently become:
\begin{enumerate}
\item[(2)-(3)] $\beta_I(t)=x+It, \beta_L(t)=x+Lt$,
  such that $\beta_I(0)=\beta_L(0)=x$ and $\beta_I'(0)=I,
  \beta_L'(0)=L.$\\
\end{enumerate}

In order to present the next definition we need to recall a well known fact: given any $I\in \bS \subset \bK$, then both in the case of quaternions and in the case of octonions, it is possible to complete $\{1, I\}$ to an orthonormal positively oriented standard basis
$$
\{1, I, I_2,\ldots,I_{\dim\bK-1}\}
$$
of the divison algebra $\bK$ (see, e.g., \cite{cayley} for the case of octonions).

\begin{definition} (Standard set of curves) For any $I\in \bS$, let us consider the
point $x+Iy\in \C_I\subset \bK$ and an orthonormal positively oriented standard basis  $\{1, I, I_2,\ldots,I_{\dim\bK-1}\}$ of the division algebra $\bK$.  
%
%
%
The {\em standard set of curves} at the point $x + Iy$ consists:
\begin{itemize}
\item  for $y \ne 0$, of the curves
$\{\alpha, \beta_I, \Gamma_{I_l},l = 2,\ldots, \dim \bK -1 \} $;
\item for $y = 0$,  of the curves
$\{\alpha, \beta_I, \beta_{I_l}, l = 2,\ldots,\dim \bK -1 \}.$
\end{itemize}
\end{definition}

We desire now to  use the standard set of curves to calculate the differential $df$ of $f$ and to point out some
of its features.  Indeed, when naturally used with a slice regular
function $f:\Omega \to \bK$, defined on a slice domain $\Omega$  of
$\bK$,  this set  of  curves reveals an easy tool to compute and directly
interpret the real differential $df(x+Iy): \R^{\dim \bK} \to \R^{\dim \bK}$ of the
function $f$. But its full use will be seen in the sequel of this
paper, in more general situations.

 To calculate $df,$  after fixing $I\in \bS$, 
a direct computation shows that
\begin{eqnarray*}
df(x+Iy)1
&=&df(x+Iy)\alpha'(0)=\frac{d}{dt}_{|_0}f(\alpha(t))
= \frac{d}{dt}_{|_0}f(x+t+Iy)\\
&=& \dfrac{\partial f}{\partial x}(x+Iy)
\end{eqnarray*}
Analogously, and since $f$ is slice regular,
\begin{eqnarray*}
df(x+Iy)I &=&df(x+Iy)\beta'(0)=\frac{d}{dt}_{|_0}f(\beta(t))
=\frac{d}{dt}_{|_0}f(x+I(y+t))\\
&=& \dfrac{\partial f}{\partial y}(x+Iy)
= I \dfrac{\partial f}{\partial x}(x+Iy)
\end{eqnarray*}
In particular we have that  $Idf(x+Iy)1=I\dfrac{\partial f}{\partial x}(x+Iy)
=\dfrac{\partial f}{\partial y}(x+Iy)=df(x+Iy)I$ and hence
\[
df(x+Iy)_{|\C_I} =
\begin{bmatrix} f'_c(x+Iy),If'_c(x+Iy) \end{bmatrix}.
\]
Therefore, the real differential
\[
df(x+Iy)_{|(\R+I\R)}: \R^2 \to \R^{\dim \bK}
\]
is a conformal matrix. Let us now continue. The local representation formulas
(\ref{local rep formula}) and (\ref{spherical}) yield, for any $L\in\bS$ such that
$L \perp I$
\begin{eqnarray*}
df(x+Iy)L&=&df(x+Iy)\Gamma_L'(0)=\frac{d}{dt}_{|_0}f(\Gamma_L(t)) \\
&=& \frac{d}{dt}_{|_0}(\gamma(t) (M-N)^{-1} \left[f(x+My) - f(x+Ny)\right])\\
&=&L y^{-1}(M-N)^{-1} \left[f(x+My) - f(x+Ny)\right]\\
&=&Lf'_s(x+Iy)
\end{eqnarray*}  
hence we have that
\[
df(x+Iy)_{|\C_I^\perp} =
\begin{bmatrix} I_2f'_s(x+Iy),\ldots,I_{\dim \bK-1}f'_s (x+Iy)\end{bmatrix}.
\]
Therefore, the real differential
\[
df(x+Iy)_{|(\R + I\R)^\perp}: \R^{\dim \bK-2} \to \R^{\dim \bK}
\]
is a conformal matrix as well.
 Notice that
even if both
$
df(x+Iy)_{|\C_I}
$
and
$
df(x+Iy)_{|\C_I^\perp}
$
are conformal, the full differential
$
df(x+Iy)
$
may not be conformal in general.

\subsection{The differential of a smooth slice function}
In this subsection we exhibit the connection between conformality
properties of a slice function defined in a symmetric slice domain $f: \Omega = \widetilde{D} \ra \bK$ and its
stem function $F: D \ra \bK.$
Since the local representation formula (\ref{local rep formula}) holds
for slice functions, then for $y \ne 0$ we obtain, as seen above, the identity $df(x + Iy) L
= L f'_s(x + Iy) = L y^{-1}F_2(x + iy)$  for every imaginary unit $L \bot\, I$; therefore, for $y \ne 0,$ the restriction of the
differential to the orthogonal complement of the slice $\C_I$  is conformal (if nonzero).

Assume now that $F \in \mathcal{C}^3(D).$ By Proposition 7(2)  in
\cite{Ghil-Per}, $f$ is $C^1$ and hence we can calculate the
differential $df(x + Iy)_{|\C_I}:$
\begin{eqnarray*}
  df(x + Iy) 1 &=&  \partial_x F(x + iy) = \partial_x(F_1(x + iy) + I F_2(x + iy)),\\
  df(x + Iy) I &=& \partial_y F(x + iy) = \partial_y(F_1(x + iy) + I F_2(x + iy)).
\end{eqnarray*}
Therefore, in terms of the stem function $F$, for $y \ne 0$ by
formula (\ref{sph_d}), the differential $df$
may be  written as 
\[
df(x+Iy)=
\begin{bmatrix}
  \tiny\partial_x F(x + iy) &\tiny{\partial_y F(x + iy)}& \frac{I_2F_2(x + iy)}{y} & 
\ldots   & \frac{I_{\dim\bK -1} F_2(x + iy)}{y}
\end{bmatrix}.
\]
Passing to the limit as $y\to 0$, then $\frac {F_2(x+Iy)}{y}$ tends to $\partial_y F_2(x)$ and so
\[df(x)=
\begin{bmatrix}
  \tiny\partial_x F(x) &\tiny{\partial_y F(x )}& I_2\partial_yF_2(x ) & 
\ldots
& I_{\dim\bK -1} \partial_y F_2(x)
\end{bmatrix}.
\]
Therefore, for $y \ne 0$, $df(x+Iy)_{|\C_I^{\bot}}$ is conformal if and only if $F_2(x + iy) \ne 0$ and $df(x)_{|\C_I^{\bot}}$ is conformal
if and only if $\partial_y F_2(x) \ne 0.$

In the case $F$ is holomorphic and $y\neq 0$, the corresponding formula becomes
\[
df=
\begin{bmatrix} f'_c& I f'_c& I_2f'_s&\ldots& I_{\dim\bK-1}f'_s
\end{bmatrix}
\]
and, when $y=0$, we have $f'_s=f'_c$ and so
\[
df=
\begin{bmatrix}
 f'_c& I f'_c & I_2f'_c& \ldots  & I_{\dim\bK-1}f'_c
\end{bmatrix}
\]
which implies that $df(x)$ is conformal if  $f'_c(x)\neq 0$.


Let's sum up these observations in the following

\begin{proposition}\label{slice_dif}
  Let $A$ denote either $\R$ or $\bK$, and let
  $f:\widetilde D \to A$ be a slice function generated by a $C^3$ stem function $F$ defined on a symmetric domain $D \subset \C = \R +i\R.$
Then $df(x+Iy)_{|\C_I^{\bot}}$  is conformal if nondegenerate. Moreover,
\begin{enumerate}[(a)]
\item if $dF$ is conformal on $D,$ then $f$ is
slice conformal on $\widetilde D$. In particular, if $F$ is
holomorphic, then $f$ is slice regular and hence a slice conformal immersion if
$df$ has full rank;
\item if $A = \R,$ $dF$ is conformal, $\partial_y F_2\ne 0$ on $\R \cap D$ and if $F_2 \ne 0$ on $D\setminus \R,$
  then $f$ is slice preserving  and slice conformal on $\widetilde D$.
\end{enumerate}
\end{proposition}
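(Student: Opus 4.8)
The plan is to read the real differential $df(x+Iy)$ off the stem function $F=F_1+\iota F_2$, exactly as in the computation preceding the statement, and then to check the two conformality conditions separately on $\C_I$ and on $\C_I^{\bot}$, using the standard set of curves.

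First I would settle the $\C_I^{\bot}$ assertion, which needs no extra hypothesis. After fixing a positively oriented standard basis $\{1,I,I_2,\dots,I_{\dim\bK-1}\}$ and applying the curves $\Gamma_{I_l}$ (respectively $\beta_{I_l}$ when $y=0$), the columns of $df(x+Iy)_{|\C_I^{\bot}}$ are $y^{-1}I_lF_2(x+iy)$ for $y\neq0$, and $I_l\,\partial_yF_2(x)$ for $y=0$, with $l=2,\dots,\dim\bK-1$. All of these share the common right factor $F_2$ (respectively $\partial_yF_2$), so I would invoke the composition identity $\langle uc,vc\rangle=\langle u,v\rangle\,|c|^2$, valid in both $\H$ and $\Oc$, with $u=I_l$, $v=I_m$ taken from the orthonormal family $\{I_2,\dots,I_{\dim\bK-1}\}$. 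This shows the columns are pairwise orthogonal of common norm $|F_2|/|y|$ (respectively $|\partial_yF_2|$), whence $df(x+Iy)_{|\C_I^{\bot}}$ is conformal exactly when that norm is nonzero, i.e. when the block is nondegenerate. This is the first assertion, and it also isolates what the hypotheses $F_2\neq0$ off $\R$ and $\partial_yF_2\neq0$ on $\R\cap D$ in (b) are for: they keep this block conformal at every point.

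For the $\C_I$ block, whose columns are $\partial_xF_1+I\partial_xF_2$ and $\partial_yF_1+I\partial_yF_2$, I would write $df(x+Iy)_{|\C_I}=L_I\circ dF(x+iy)$, where $L_I\colon A_{\C}\to\bK$ is the real-linear ``fold'' $a+\iota b\mapsto a+Ib$. In case (b), where $A=\R$, the map $L_I$ carries the orthonormal pair $1,\iota$ to the orthonormal pair $1,I$ spanning $\C_I$, so $L_I$ is a linear isometry of $\C$ onto $\C_I$; composing an isometry with the conformal map $dF$ preserves conformality, and $df(x+Iy)_{|\C_I}$ is conformal wherever $dF$ is nondegenerate. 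Combined with the previous paragraph and with Remark~(b) (which gives that $f$ is slice preserving for $A=\R$), this proves (b) in full. In the holomorphic subcase of (a) I would argue instead from the Cauchy--Riemann system $\partial_yF_1=-\partial_xF_2$, $\partial_yF_2=\partial_xF_1$: it says precisely that the second column is $I$ times the first, $df(x+Iy)I=I\,df(x+Iy)1=If'_c$, so $\{f'_c,If'_c\}$ is automatically a conformal pair (equal norm, and orthogonal because $\langle a,Ia\rangle=0$ for imaginary $I$). Hence $f$ is slice regular, and a slice conformal immersion wherever $df$ has full rank.

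The step I expect to be the genuine obstacle is the general statement of (a) for $A=\bK$, namely passing from conformality of $dF$ alone to slice conformality of $f$. The difficulty is that for $A=\bK$ the fold map $L_I$ is no longer an isometry (it is not even injective), so it need not preserve the conformality of an arbitrary conformal $dF$: the quantities that control the $\C_I$ block, $\langle\partial_xF_1,I\partial_xF_2\rangle-\langle\partial_yF_1,I\partial_yF_2\rangle$ and $\langle\partial_xF_1,I\partial_yF_2\rangle-\langle\partial_xF_2,I\partial_yF_1\rangle$, are not governed by the two scalar relations $|\partial_xF|=|\partial_yF|$ and $\langle\partial_xF,\partial_yF\rangle=0$ that define conformality of $dF$. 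I would therefore expect this case to rest on exactly the extra structure used above, which forces the second $\C_I$-column to be $I$ times the first; concretely I would look for the Cauchy--Riemann (i.e. holomorphic) hypothesis, or the reality hypothesis $A=\R$, to reappear, and I would treat the claim that \emph{unrestricted} conformality of a $\bK$-valued stem function already yields slice conformality as the point most in need of a careful, possibly strengthened, argument.
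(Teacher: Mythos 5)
Your computation of the two blocks is essentially the paper's. For $\C_I^{\bot}$ you read the columns $y^{-1}I_lF_2(x+iy)$ (resp.\ $I_l\,\partial_yF_2(x)$ on the real axis) off the standard set of curves and use that right multiplication by a fixed element of $\bK$ is a similarity, so the block is conformal exactly when it is nonzero; this is the content of the displayed formula for $df$ preceding the statement. For (b) you observe that $df$ is block diagonal with respect to $\bK=\C_I\oplus\C_I^{\bot}$ and that the fold $a+\iota b\mapsto a+Ib$ is an isometry of $\C$ onto $\C_I$ when $A=\R$; the paper's proof is precisely the block-diagonal display, with the $\C_I$-part delegated to Remark \ref{rem_dif}. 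The holomorphic subcase of (a), via the columns $f'_c$ and $If'_c$, is also the paper's computation. So wherever the paper supplies an argument, yours coincides with it.

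Your unease about the unrestricted case of (a) for $A=\bK$ is justified, and it is a gap in the paper's statement rather than in your argument: the written proof opens with ``we are left to consider only the case $A=\R$,'' and Remark \ref{rem_dif} asserts the equivalence between conformality of $dF$ and of $df_{|\C_I}$ only for $A=\R$, so the $\C_I$-block of (a) for $\bK$-valued stem functions is never addressed. In fact the implication fails as literally stated. Take $\bK=\H$, $D=\C$, and $F(x+iy)=x+\iota\,yj$: this is a smooth stem function ($F_1=x$ even in $y$, $F_2=yj$ odd in $y$) with $\partial_xF=(1,0)$ and $\partial_yF=(0,j)$ orthonormal in $\H_{\C}\cong\R^8$, so $dF$ is conformal; but the induced slice function is $f(x+Iy)=x+y\,Ij$, and at $I=j$ one gets $f(x+jy)=x-y$, so $df(x+jy)_{|\C_j}$ has the two parallel columns $1$ and $-1$ and is not conformal. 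The correct hypothesis replacing ``$dF$ conformal'' in (a) for $A=\bK$ is that $\partial_yF=\pm\iota\,\partial_xF$ (i.e.\ $F$ holomorphic or antiholomorphic in $\iota$), which forces the second column of $df_{|\C_I}$ to be $\mp I$ times the first and hence restores conformality, exactly as in your Cauchy--Riemann argument; for $A=\R$ this is equivalent to conformality of $dF$, which is why (b) is unaffected.
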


\begin{proof} We are left to consider only the case  $A = \R$. Since
$f$ is slice preserving, 
    then $df$ written with respect to the decomposition
    $\bK = {\C_I} \oplus {\C_I}^{\bot}$ is of the form
\[
   \begin{bmatrix}
         df(x+Iy)_{|\C_I}& 0\\
         0&df(x+Iy)_{|\C_I^{\bot}}
   \end{bmatrix}.
\]
\end{proof}

\begin{remark}\label{rem_dif}\emph{  Notice that if  a stem function $F$ is conformal, it is not necessarily holomorphic.
    In the case $A = \R, $ the stem function $F: D \ra A_{\C}$ is
    conformal if and only if $F$ is either holomorphic or
    antiholomorphic or, to put it differently, if and only if $df(x +
    Iy)_{|\C_I}$ is conformal on $\widetilde{ D}.$  Furthermore, notice that conformality of both $df(x+Iy)_{|\C_I}$ and $df(x+Iy)_{|\C_I^{\bot}}$ does not imply that $df$ has full rank.}
\end{remark}

\subsection{Slice conformal curves}

Using Proposition \ref{slice_dif}, we
can state the following result.

\begin{theo}\label{potential_result}
Let $A$ denote either $\R$  or $\bK.$
Let $D$ be a symmetric domain in $ \C = \R + i\R$ and $G,H:
{D} \ra A_{\C}$ be stem functions with  $G, H \in {\mathcal C}^3(D).$  Write
$G = G_1 +\iota G_2,H = H_1 + \iota H_2$  and let $F_1 = (G_1,H_1),F_2 = (G_2,H_2).$
Let $$f: \widetilde{D} \rightarrow \bK \times\bK$$ be  the slice curve
induced by the map $F = (G, H) =F_1 +\iota F_2: D \rightarrow A_{\C}^2$ in the following way
\begin{eqnarray*}
  f(x + Iy) &=& (G_1(x +iy)+IG_2(x+iy),H_1(x +iy)+IH_2(x+iy))\\
  & =: &(g(x +Iy), h(x+ Iy)).
\end{eqnarray*}
Assume that:
\begin{enumerate}[(a)]
\item the differential $dF$ is conformal on $D$;
\item the partial derivative $\partial_y F_2\ne 0$ on $\R \cap D$
  and $F_2 \ne 0$ on $D\setminus \R.$
\end{enumerate}
Then $df(x + Iy)|_{|\C_I}$ and $df(x + Iy)|_{|\C_I^{\bot}}$ are both conformal.
 If, in addition, $f$ is an injective immersion, then $f$  is a slice conformal parameterization of $f(\widetilde{D}).$

 In the case $A = \R,$ if $F$ is injective, then $F_2\ne 0$ on $D\setminus\R$ is automatically fulfilled;
 hence  if we assume that $F$ is injective, $dF$  is conformal on $D$ and  $\partial_y F_2\ne 0$ on $\R \cap D$,
 then $f$ is an injective immersion, and hence a slice conformal parameterization  of $f(\widetilde{D}).$

\end{theo}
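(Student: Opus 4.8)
The plan is to read $f=(g,h)$ as the slice function on $\widetilde D$ induced by the single $C^3$ stem function $F=(G,H): D\to A_\C^2\cong (A\times A)_\C$, so that the whole statement becomes an instance of Proposition~\ref{slice_dif} with the target algebra $A$ replaced by the product $A\times A\subset\bK\times\bK$. First I would observe that the differential computation of the preceding subsection goes through for the product target, since at a point $x+Iy$ with $y\neq0$ the columns of $df(x+Iy)$ in the directions $1,I$ are $\partial_xF$ and $\partial_yF$, while the columns in the orthogonal directions $I_2,\dots,I_{\dim\bK-1}$ are $I_\ell F_2/y=(I_\ell G_2/y,\,I_\ell H_2/y)$ (and, in the limit $y\to0$, $I_\ell\,\partial_yF_2$). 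Granting the two conformalities, the first assertion follows, and if $f$ is moreover an injective immersion then it is a slice conformal parameterization by Definition~\ref{slice conformality and Riemann manifolds}, with nothing further to check for that part.

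For the block $df(x+Iy)_{|\C_I^\perp}$ I would prove conformality directly. Since $\H$ and $\Oc$ are composition algebras, polarising $|xa|=|x|\,|a|$ yields $\langle I_\ell v,I_m v\rangle=\langle I_\ell,I_m\rangle\,|v|^2$ for every $v\in\bK$; applying this in each slot to $v=G_2$ and $v=H_2$ and summing shows that the columns $I_\ell F_2/y$ are mutually orthogonal with common norm $|F_2|/|y|$, where $|F_2|^2=|G_2|^2+|H_2|^2$. By hypothesis (b) this norm is nonzero (for $y=0$ the common norm is $|\partial_yF_2|\neq0$), so the block is conformal. The conformality of the block $df(x+Iy)_{|\C_I}$, whose columns are $\partial_xF$ and $\partial_yF$, is precisely what hypothesis (a) delivers through Proposition~\ref{slice_dif}(a), whose proof applies to the product target $A\times A$.

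It remains to treat the refinement for $A=\R$. The implication ``$F$ injective $\Rightarrow F_2\neq0$ on $D\setminus\R$'' is the easy half: if $F_2(z_0)=0$ with $\Im z_0\neq0$, then the complex-intrinsic identities $F_1(\bar z_0)=F_1(z_0)$ and $F_2(\bar z_0)=-F_2(z_0)=0$ give $F(z_0)=F(\bar z_0)$ with $z_0\neq\bar z_0$, contradicting injectivity. Hence (a) and (b) hold and the first part applies. That $f$ is an immersion is then automatic: for $A=\R$ the induced $f$ is slice preserving, so, as in the proof of Proposition~\ref{slice_dif}, $df$ is block diagonal with respect to $\bK=\C_I\oplus\C_I^\perp$, carrying $\C_I$ into $\C_I\times\C_I$ and $\C_I^\perp$ into $\C_I^\perp\times\C_I^\perp$; both blocks are nondegenerate conformal matrices with orthogonal images, so $df$ has full rank $\dim\bK$ (note that for general $A=\bK$ this need not hold, consistently with Remark~\ref{rem_dif}, which is why the immersion hypothesis is needed in the first part).

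The main obstacle is the global injectivity of $f$, which I would argue as follows. Suppose $f(x_1+I_1y_1)=f(x_2+I_2y_2)$ with $y_1,y_2\ge0$. Equating real and imaginary parts in each $\bK$-slot gives $G_1(x_1+iy_1)=G_1(x_2+iy_2)$ and $H_1(x_1+iy_1)=H_1(x_2+iy_2)$, together with $G_2(x_1+iy_1)I_1=G_2(x_2+iy_2)I_2$ and $H_2(x_1+iy_1)I_1=H_2(x_2+iy_2)I_2$. Since $F_2=(G_2,H_2)$ does not vanish off $\R$, for $y_1,y_2>0$ one of the last two identities has a nonzero real coefficient, forcing $I_1=\pm I_2$. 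If $I_1=I_2$, the four identities read $F(x_1+iy_1)=F(x_2+iy_2)$, whence the two points coincide by injectivity of $F$; if $I_1=-I_2$, the complex-intrinsic relations turn the same data into $F(x_1+iy_1)=F(x_2-iy_2)$, forcing $y_1=-y_2<0$, which is impossible. The degenerate cases with some $y_j=0$ are handled the same way, again using $F_2\neq0$ off $\R$ to prevent a real point from being identified with a non-real one. This sign analysis, together with the immersion property, shows that $f$ is an injective immersion, hence a slice conformal parameterization of $f(\widetilde D)$ by Definition~\ref{slice conformality and Riemann manifolds}. The one point demanding genuine care is precisely the exclusion of $I_1=-I_2$, where the complex-intrinsic symmetry of the stem function is indispensable.
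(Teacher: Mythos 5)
Your proposal is correct and follows essentially the same route as the paper: conformality of the two blocks via the stem-function differential computation (the paper cites Proposition~\ref{slice_dif} and Remark~\ref{rem_dif} where you spell out the composition-algebra orthogonality explicitly), the same derivation of $F_2\neq0$ off $\R$ from injectivity of $F$, the same case analysis $J=\pm I$ versus $J\neq\pm I$ for injectivity of $f$, and the same block-diagonal argument for full rank in the slice-preserving case. The only differences are expository, not mathematical.
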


\begin{proof}
To prove  the first part of the theorem, notice that
by Remark \ref{rem_dif}
the conformality of $dF$ implies $df_{|\C_I}$ conformal. The assumption $(b)$  and
Proposition \ref{slice_dif} imply that $df(x + Iy)_{|\C_I^{\bot}}$ is conformal.

We are left to prove that, if $A = \R$ and $F$ is injective, then $f$ is injective and $df$ has full rank.
Notice that
the non vanishing of $F_2$ off the real axis
follows from the    injectivity of $F$: indeed, at least one of the values $G_2(x + iy)$ or
    $H_2(x + iy)$ must be nonzero, otherwise $F(x + iy) = F(x - iy).$

Let  us first show that injectivity of $F$ implies the injectivity of $f$.
To this aim, consider $z = x + iy,w = u + iv \in D$ and assume that $f(x + Iy) = f(u + Jv).$  Then
\begin{eqnarray*}
&&G_1(x +iy)+IG_2(x+iy)=G_1(u +iv)+JG_2(u+iv),\\
&&H_1(x +iy)+IH_2(x+iy)= H_1(u +iv)+J H_2(u+iv).
\end{eqnarray*}
By assumption $G_l$ and $H_l,$ $l = 1,2$ are real valued and by (\ref{intrinsic}) the functions $G_2,H_2$ vanish at real points, so we have the following:
\begin{eqnarray*}
&&G_1(x +iy) = G_1(u +iv) = G_1(u - iv),\\
&& H_1(x+iy) = H_1(u + iv)=H_1(u - iv),\\
J=I: \,&& G_2(x + iy) = G_2(u+iv), \, H_2(x+iy) = H_2(u + iv),\\
J = -I: &&\,G_2(x + iy) = -G_2(u+iv), \, H_2(x+iy) = -H_2(u + iv),\\
J \ne \pm I:&& G_2(x + iy) = G_2(u+iv)= H_2(x+iy) = H_2(u + iv)=0.
\end{eqnarray*}
The injectivity of $F$ excludes the last possibility unless $y =v = 0.$ In this case
$F(x) = F(u),$ so $x = u.$ If $J = I$ then $F(x + iy) = F(u + iv)$ so $x + Iy = u + Iv.$ If $J = -I$ then
$G_2(x + iy) = - G_2(u+iv) = G_2(u - iv),$ $H_2(x + iy) = -H_2(u+iv) = H_2(u - iv).$ Because $G_2, H_2$ are even in $y$ we have
$F(x + iy) = F(u - iv)$ which implies that $x + iy = u - iv$ and hence $x + Iy = x + (-I)(-y),$ so $f$ is injective.

To see that the rank of $df$ is full, notice that on the real axis $\partial_yF_2$ does not vanish by   assumption, and $F_2$ does not vanish off the real axis.
Since both  $df_{|{\C_I}}$ and $df_{|{\C_I^{\bot}}}$ are conformal and
$df$ has the following block structure
\[
 df =\begin{bmatrix}df_{|{\C_I}} & df_{|{\C_I^{\bot}}}
 \end{bmatrix}=
\begin{bmatrix}
         dg_{|{\C_I}}& 0\\
         0 &              dg_{|{\C_I^{\bot}}}\\
         dh_{|{\C_I}}& 0\\
         0 &              dh_{|{\C_I^{\bot}}}
\end{bmatrix}
\]
the rank of $df$ is full.

\end{proof}

\begin{remark} \emph{ With reference to the preceding statement and proof, notice that conformality of $F$ does not imply conformality of $G$ and $H$.}
\end{remark}

  \begin{remark} \emph{A statement analogous to the
    one  of Theorem \ref{potential_result} holds in a ``$n$-vectorial'' version, i.e., for  maps  $F  : D \rightarrow A_{\C}^n$ defined by $n$-tuples of stem functions.}
\end{remark}

\subsection{Quaternionic and octonionic slice regular curves }\label{sec:curves}

We will use the standard notion of curve in the quaternionic and octonionic  setting.
\begin{definition}
Let $\Omega\subseteq \bK$ be a slice domain, and let $$f:\Omega \to \bK^2$$ $$f(q)=(g(q), h(q))$$ be a map whose components  $g, h: \Omega \to \bK$ are slice regular functions. If  $f$ is an immersion, then $f$ will be called \emph{a slice regular curve (in $\bK^2$)}.
\end{definition}
Let us now consider a slice regular curve $f:\Omega \to \bK^2$, with slice regular components $g, h:\Omega \to \bK$,  and choose any $I\in \bS.$
Using the standard set of curves defined in Subsection \ref{four-curves}, we get that the differential
\[
df : \R^{\dim\bK} \to \bK^2
\]
assumes the form
\[
df=
\begin{bmatrix}
 g'_c& I g'_c & I_2g'_s,\ldots,I_{\dim\bK-1 }g'_s\\
 h'_c& I h'_c & I_2h'_s,\ldots,I_{\dim\bK-1}h'_s
\end{bmatrix}.
\]
The first $2$ columns of this $(2\dim\bK) \times \dim\bK$ real matrix, and separately the last $(\dim\bK - 2)$ columns of the same matrix, are
 orthogonal to each other and with the same norms, and hence $F$ is slice isothermal. In conclusion we have proved

\begin{proposition}\label{slice curves}
Let $\Omega\subseteq \bK$ be a slice domain, and let $f:\Omega \to \bK^2$ be a slice regular curve. If $f$ is injective, then $f(\Omega)$
is a parameterized  hypercomplex   Riemann manifold in $\bK^2$, and  the map  $f: \Omega \to  f(\Omega)$   is a slice conformal parameterization. In particular,
graphs of slice regular curves are  parameterized  hypercomplex   Riemann manifolds in $\bK^2.$
\end{proposition}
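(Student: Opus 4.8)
The plan is to extract everything from the explicit matrix for $df(x+Iy)$ obtained just above by applying the standard set of curves, and then to certify conformality of its two natural blocks and use injectivity to upgrade the map to a parameterization. Concretely, the first two columns of $df$ (coming from $\alpha$ and $\beta_I$) constitute $df(x+Iy)_{|\C_I}$, with columns $(g'_c,h'_c)$ and $(Ig'_c,Ih'_c)$ viewed in $\bK^2\cong\R^{2\dim\bK}$, while the remaining $\dim\bK-2$ columns (coming from the $\Gamma_{I_l}$) constitute $df(x+Iy)_{|\C_I^\perp}$, with $l$-th column $(I_lg'_s,I_lh'_s)$. For each restriction it suffices to show that the Gram matrix of its columns is a (nonzero) scalar multiple of an identity matrix.

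For $df(x+Iy)_{|\C_I}$ I would note that both columns have common squared norm $|g'_c|^2+|h'_c|^2$, because left multiplication by the unit $I$ is a linear isometry of $\bK$, and that they are orthogonal because $\langle w,Iw\rangle=0$ for every $w\in\bK$: indeed $\langle w,Iw\rangle=\langle Iw,I(Iw)\rangle=\langle Iw,-w\rangle=-\langle w,Iw\rangle$, using that $L_I$ is orthogonal and, by alternativity, $I(Iw)=-w$. Hence the Gram matrix equals $(|g'_c|^2+|h'_c|^2)I_2$.

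The block $df(x+Iy)_{|\C_I^\perp}$ is treated the same way: all its columns share the squared norm $|g'_s|^2+|h'_s|^2$, again because each $L_{I_l}$ is an isometry, so only the off-diagonal entries $\langle I_lw,I_mw\rangle$ with $l\ne m$ need to vanish. This is the one delicate point, and I expect it to be the main obstacle. In the quaternionic case it is immediate from associativity; for octonions I would instead invoke the adjoint identity $\langle I_lw,p\rangle=-\langle w,I_lp\rangle$ together with the linearized alternative law $I_l(I_mw)+I_m(I_lw)=(I_lI_m+I_mI_l)w$ and the anticommutation relation $I_lI_m+I_mI_l=-2\langle I_l,I_m\rangle=0$ for orthonormal imaginary units, which together give $2\langle I_lw,I_mw\rangle=-\langle w,(I_lI_m+I_mI_l)w\rangle=0$.

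Finally I would assemble the conclusion. Since $f$ is a slice regular curve it is by definition an immersion, so $df(x+Iy)$ has full rank $\dim\bK$; this forces both $|g'_c|^2+|h'_c|^2$ and $|g'_s|^2+|h'_s|^2$ to be nonzero (otherwise an entire block of columns would vanish and the rank would drop below $\dim\bK$), so neither conformal factor degenerates and both restrictions are genuinely conformal for all $I\in\bS$ and all $x,y\in\R$. This is precisely the slice conformality of Definition \ref{slice conformality and Riemann manifolds}; combined with the injectivity hypothesis it makes $f$ an injective immersion, hence a slice conformal parameterization and $f(\Omega)$ a parameterized hypercomplex Riemann manifold. (This can also be read off as the holomorphic, $A=\bK$ specialization of Theorem \ref{potential_result}.) For the closing assertion about graphs, the parameterization $q\mapsto(q,g(q))$ of a slice regular function $g$ is automatically injective and an immersion, since an identity block appears among the columns of its differential, so it is a special case of what has just been proved.
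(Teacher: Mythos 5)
Your proposal is correct and follows essentially the same route as the paper: the paper's argument is precisely the computation of $df$ via the standard set of curves given just before the statement, followed by the assertion that the first two columns, and separately the last $\dim\bK-2$ columns, are mutually orthogonal with equal norms. You supply the details the paper leaves implicit (the Gram-matrix verification, including the octonionic orthogonality $\langle I_lw,I_mw\rangle=0$, and the non-degeneracy of the conformal factors from the immersion hypothesis), which is a useful elaboration but not a different proof.
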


As we already pointed out, in general $f$ is (a slice conformal but)
not a conformal parmeterization. It is well known in fact that the
slice regular functions $f, g$ are in general not conformal at non
real points of $\Omega$ (see, e.g., \cite{libroGSS}), and hence $f$
cannot be a conformal parameterization in general.\\

We end this section with a natural question, on how the quaternionic
or octonionic parameter can be changed between slice regular
quaternionic or octonionic curves having the same image. Indeed, let
us consider $\Omega, \Omega' \subseteq \bK$ slice domains, $f
=(f_1,f_2) : \Omega\to \bK^2$ and $g =(g_1,g_2) : \Omega' \to \bK^2$
injective, slice regular curves with the same image
$f(\Omega)=g(\Omega')$. In this situation, we may assume that locally
$g_1$ is injective.  Then the local equalities $f_1(q) = g_1(q')$ and
$f_2(q) = g_2(q')$ imply $$f_2 = g_2 \circ (g_1^{-1} \circ f_1)$$ and
since $f_2, g_2:\Omega' \to \bK$ are slice regular functions, this
functional equation is in general not valid.  Nevertheless, we know
that it holds if, for instance, $g_1^{-1} \circ f_1: \Omega \to
\Omega' $ is a slice preserving regular function. Hence, we can make
the following

\begin{remark}\label{change par}
 \emph{ Let $f$ and $g$ be injective immersions having the same
  image $\Gamma\subseteq \bK^2$. If a change of quaternionic or
  octonionic parameter between $f$ and $g$ is a slice preserving
  invertible function, then $f$ is a slice conformal parameterization if, and only if, $g$ is a slice conformal parameterization.}
\end{remark}

What established in this section can be directly reformulated for the case of  slice regular curves $f: \Omega \to \bK^n$ defined on slice domains $\Omega \subseteq \bK.$  To conclude, we point out that Remark \ref{change par} is valid in a more general setting, as explained in the next remark.
\begin{remark}
\emph{Let $f$ be a slice isothermal parameterization having the  hypercomplex Riemann manifold  $\Gamma\subseteq \R^N$ as its image. Then, for every regular slice preserving invertible change of parameter $\phi$ between slice domains, the map $f\circ\phi$ is a slice isothermal parameterization 
for the hypercomplex  Riemann manifold $\Gamma$.}
\end{remark}

The following remark should better explain the definition of slice conformal immersion that has been adopted.

\begin{remark}\emph{
Let $\{1,i,j,k\}$ be the standard basis of $\H$, and let $f: \H \to \H^2$ be the function $$f(x + Iy) = (x +Iy, x+\psi(I)y)$$ where
 $\psi : \bS \rightarrow \bS$ is the odd $C^\infty$ function defined by
 \[
 \psi(\alpha i+\beta j+\gamma k)= \dfrac{\alpha^3i+\beta j+\gamma^3k}{\sqrt{\alpha^6+\beta^2+\gamma^6}}
 \]
i.e., when $\langle \ , \ \rangle$ denotes the Euclidean scalar product of $\R^4\cong \H$, by
 \[
 \psi(I)= \dfrac{\langle I , i \rangle^3i+\langle I , j \rangle j+\langle I , k \rangle^3k}{\sqrt{\langle I , i \rangle^6+\langle I , j \rangle^2+\langle I , k \rangle^6}}
 \]
While applying the standard set of curves, take the point $x+Iy=x+iy$ (i.e., $I=i$) with  $y\neq 0$, choose $J=j$ and use the curves
\begin{eqnarray*}
&&\Gamma_j(t)=i\cos (t/y)+j \sin (t/y)=i\exp(-k (t/y)), \\
&&\Gamma_k(t)=i\cos (t/y)+k \sin (t/y)=i\exp(j (t/y))
\end{eqnarray*}
Direct computations show that
\begin{eqnarray*}
df(x+iy)1 &=& (1,1), \,\quad  df(x+iy)i = (i, \psi(i)) = (i,i)\\
df(x+iy)j
&=&  \frac{d}{dt}_{|_0}\left(x+ i\exp(-k (t/y)) y, x+  \frac{i\cos^3 (t/y)+j \sin (t/y)}{\sqrt{\cos^6 (t/y)+ \sin^2 (t/y)}}y\right)\\
&=&(j, j)\\
df(x+iy)k
&=&  \frac{d}{dt}_{|_0}\left(x+ i\exp(j (t/y)) y, x+  \frac{i\cos^3 (t/y)+j \sin^3 (t/y)}{\sqrt{\cos^6 (t/y)+ \sin^6 (t/y)}}y\right)\\
&=&(k, 0)
\end{eqnarray*}
Thus:
\[
df(x+iy)=\begin{bmatrix}
1&0 & 0 & 0\\
0 & 1& 0 & 0\\
0 & 0 & 1  & 0 \\
0 & 0 & 0 &1\\
1 & 0 & 0 & 0\\
0 &1& 0 & 0\\
0 &0& 1& 0\\
0 &0 & 0  &0\\
\end{bmatrix}
\]
and hence the last two columns have different norms. In conclusion such an $f$ is not a slice isothermal parameterization.
}
\end{remark}
 \section{Other examples of hypercomplex Riemann manifolds}
 \label{4}

\subsection{The Riemann sphere} This example generalizes to real dimensions $4$ and $8$ the case of the Riemann sphere in the complex setting.
\begin{proposition}
 Let us set $m = \dim \bK\in \{4, 8\}$. Consider the unit sphere $S^{m}\subset \R^{m+1}\cong \bK\times \R$ and the inverse of the stereographic projection from the north pole $N=(0,\ldots,0,1)$ of $S^{m}$ onto the equatorial plane $\bK\cong \R^{m}$, namely
\begin{eqnarray}
f: \R^{m}\cong \bK  \to  S^{m}\setminus \{N\} \subset \bK\times \R \cong \R^{m+1},
\end{eqnarray}
defined by
\begin{eqnarray}
  f(x+Iy)=\left(\dfrac{2(x+Iy)}{1+x^2+y^2}, \dfrac{-1+x^2+y^2}{1+x^2+y^2} \right).
\end{eqnarray}
Then $S^{m}\setminus \{N\}$ is a special parameterized hypercomplex  Riemann manifold and  the map $f$ is a  conformal parameterization.
Analogous statement can be proved for the stereographic projection from the south pole $S.$
\end{proposition}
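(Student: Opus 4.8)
The goal is to upgrade the ``slice conformal'' conclusion to genuine conformality of the \emph{full} differential, which is exactly what ``special'' requires: I must exhibit a never-vanishing $k$ with $\,{}^{t}df(u)\,df(u)=k(u)\,\Id$, equivalently $\langle df(u)v,\,df(u)w\rangle=k(u)\langle v,w\rangle$ for all $v,w\in\bK\cong\R^m$. This is the $m$-dimensional incarnation of the classical fact that inverse stereographic projection is conformal, so the plan is to verify it by a single direct differential computation, carried out uniformly for $m=4$ and $m=8$, and then to record injectivity and the immersion property. I would deliberately \emph{not} route the argument through Proposition \ref{slice_dif} or Theorem \ref{potential_result}, since those only yield the weaker slice-conformal statement, whereas here the direct calculation gives the full result at once.

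First I would set $u=x+Iy\in\bK$, abbreviate $\rho=1+|u|^2=1+x^2+y^2$, and rewrite the last coordinate as $\tfrac{|u|^2-1}{\rho}=1-\tfrac2\rho$. Differentiating $f(u)=\bigl(\tfrac{2u}{\rho},\,1-\tfrac2\rho\bigr)$ in an arbitrary direction $v\in\R^m$ and using $D_v\rho=2\langle u,v\rangle$, I obtain
\[
df(u)v=\left(\frac{2v}{\rho}-\frac{4\langle u,v\rangle\,u}{\rho^2},\ \frac{4\langle u,v\rangle}{\rho^2}\right)\in\bK\times\R.
\]
As a consistency check against the standard set of curves: for $v=I_l\perp\{1,I\}$ one has $\langle u,v\rangle=0$, whence $df(u)I_l=\bigl(\tfrac{2I_l}{\rho},0\bigr)$, matching the fact that along the great-circle curves $\Gamma_{I_l}$ only the $\bK$-numerator moves while $\rho$ and the real component stay fixed.

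The heart of the proof is the evaluation of the Gram form. Expanding $\langle df(u)v,df(u)w\rangle$ produces the diagonal term $\tfrac{4\langle v,w\rangle}{\rho^2}$ together with several cross terms, all proportional to $\langle u,v\rangle\langle u,w\rangle$; collecting them leaves the scalar factor $\bigl(-\tfrac1{\rho^3}+\tfrac{|u|^2+1}{\rho^4}\bigr)$. The key cancellation is that $|u|^2+1=\rho$, so this factor vanishes identically and one is left with
\[
\langle df(u)v,\,df(u)w\rangle=\frac{4}{\rho^2}\,\langle v,w\rangle ,
\]
that is $k(u)=\tfrac{4}{(1+|u|^2)^2}>0$ everywhere; equivalently the columns $df(u)1,df(u)I,df(u)I_2,\dots$ are mutually orthogonal of common norm $\tfrac2\rho$. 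In particular $df(u)$ has full rank $m$, so $f$ is an immersion, and $f$, being the inverse of stereographic projection, is a bijection onto $S^m\setminus\{N\}$, hence injective. Thus $f$ is a conformal parameterization and $S^m\setminus\{N\}$ a special hypercomplex Riemann manifold. The south-pole case follows verbatim from the analogous map $f_S(u)=\bigl(\tfrac{2u}{\rho},\,\tfrac{1-|u|^2}{\rho}\bigr)$, the sign change in the last coordinate leaving every norm unchanged.

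I do not anticipate a genuine obstacle: the computation is routine, and the single delicate point is the bookkeeping of the cross terms, which collapses entirely thanks to the identity $|u|^2+1=\rho$. The only thing worth stating with care is that we are proving the stronger \emph{special} (fully conformal) conclusion rather than mere slice conformality, so the emphasis should be on the simultaneous orthogonality and equality of norms of \emph{all} $m$ columns, not just of the two blocks tangent and normal to the slice.
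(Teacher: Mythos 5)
Your proof is correct and takes essentially the same route as the paper: the paper's entire proof is the one-line remark that the inverse of the stereographic projection is well known to be conformal, and your Gram-matrix computation (yielding $k(u)=4/(1+|u|^2)^2$) is simply an explicit verification of that classical fact, together with the obvious injectivity and immersion remarks.
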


\begin{proof} It is well known that the inverse of stereographic projection is conformal, the rest follows.
\end{proof}


We can now conclude by exhibiting the ``Riemann" structures of $1$-dimensional quaternionic manifolds of the spheres   $S^4 \subset \R^5$ and $S^8 \subset \R^9$. In the case of $S^4$, this structure corresponds to that of slice  quaternionic manifold, as defined in \cite{GGS}. In the case of $S^8$, it corresponds to a natural generalization to the case of octonions.

\begin{theo} Let us set $m = \dim \bK\in \{4, 8\}$. Let $f$ and $h$ be the following maps
\begin{eqnarray*}
f: \R^m\cong \bK  \to  S^m\setminus \{N\} \subset \bK \times \R \cong \R^{m+1}
\end{eqnarray*}
\begin{eqnarray*}
  f(x+Iy)=\left(\dfrac{2(x+Iy)}{1+x^2+y^2}, \dfrac{-1+x^2+y^2}{1+x^2+y^2} \right)
\end{eqnarray*}
and
\begin{eqnarray*}
h: \R^m\cong \bK  \to  S^m\setminus \{S\} \subset \bK\times \R \cong \R^{m+1}
\end{eqnarray*}
\begin{eqnarray*}
  h(x+Iy)=\left(\dfrac{2(x-Iy)}{1+x^2+y^2}, \dfrac{1-x^2-y^2}{1+x^2+y^2} \right).
\end{eqnarray*}
Then the differentiable conformal atlas $\left\{(\bK, f), (\bK, h)\right\}$ endows $S^m\subset \R^{m+1}$ with a structure of slice quaternionic or slice octonionic manifold.

\end{theo}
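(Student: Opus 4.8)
The plan is to check that $\{(\bK,f),(\bK,h)\}$ satisfies the three requirements of a slice quaternionic (resp. octonionic) manifold in the sense of \cite{GGS}: the two charts are conformal parameterizations, they cover $S^m$, and the change of coordinates is a slice preserving, slice regular diffeomorphism. By the preceding Proposition the inverse stereographic projection $f$ is a conformal parameterization of $S^m\setminus\{N\}$, and the analogous statement there gives that $h$ is a conformal parameterization of $S^m\setminus\{S\}$. Since $N\neq S$, the two images cover $S^m$, so $\{(\bK,f),(\bK,h)\}$ is a differentiable conformal atlas; it then remains only to analyse the transition map.

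The heart of the argument is the explicit computation of the change of coordinates on the overlap $S^m\setminus\{N,S\}$. Writing a point of $S^m$ as $p=(v,w)$ with $v\in\bK$, $w\in\R$ and $|v|^2+w^2=1$, I would invert the two defining formulas to obtain $f^{-1}(p)=\tfrac{v}{1-w}$ and $h^{-1}(p)=\tfrac{\bar v}{1+w}$ (both denominators are real scalars, so no ordering issue arises). Substituting $p=f(q)$, for which $\bar v=\tfrac{2\bar q}{1+|q|^2}$ and $1+w=\tfrac{2|q|^2}{1+|q|^2}$, and simplifying gives
\[
h^{-1}\circ f(q)=\frac{\bar q}{|q|^2}=q^{-1},
\]
defined on $\bK\setminus\{0\}$. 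Indeed $f(0)=S$ and $h(0)=N$, so the overlap $S^m\setminus\{N,S\}$ corresponds exactly to $\bK\setminus\{0\}$; and since inversion is an involution, $f^{-1}\circ h$ is the same map.

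Finally I would identify the nature of this transition. The inversion $q\mapsto q^{-1}=\bar q/|q|^2$ sends $x+Iy\in\C_I$ to $(x-Iy)(x^2+y^2)^{-1}\in\C_I$, hence is slice preserving; moreover it is the slice function induced by the stem function $F(z)=1/z$, which is holomorphic on $\C\setminus\{0\}$ and complex intrinsic because $\overline{F(\bar z)}=\overline{1/\bar z}=1/z=F(z)$. Thus $q\mapsto q^{-1}$ is a slice preserving, slice regular bijection of $\bK\setminus\{0\}$ restricting on each slice $\C_I$ to the holomorphic inversion $z\mapsto 1/z$ (and, as a composition of a Euclidean inversion and conjugation, it is also conformal, consistently with the atlas being conformal). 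This is precisely the admissible type of change of chart, so the conformal atlas $\{(\bK,f),(\bK,h)\}$ endows $S^m$ with the desired slice quaternionic (resp. octonionic) structure.

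The step I expect to be the main obstacle is the conceptual one rather than the computation: pinning down that the compatibility condition imposed in \cite{GGS} on the transition maps of a slice quaternionic/octonionic manifold is exactly \emph{slice preserving and slice regular}, and verifying that the inversion meets it. Once the inverses of $f$ and $h$ are written out, identifying the transition with $q^{-1}$ and recognising its stem function $1/z$ is routine.
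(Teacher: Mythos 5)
Your proposal is correct and follows essentially the same route as the paper: the paper's proof consists precisely of computing the transition map $h^{-1}\circ f$ on $\bK\setminus\{0\}$, finding it equals $q\mapsto \bar q/|q|^2=q^{-1}$, and observing that this is a slice preserving, slice regular function. Your version merely makes explicit the intermediate inversion formulas $f^{-1}(v,w)=v/(1-w)$, $h^{-1}(v,w)=\bar v/(1+w)$ and the stem-function verification, which the paper compresses into ``a direct computation shows.''
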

\begin{proof}
A direct computation shows that the transition map
\[
h^{-1}\circ f = \overline{g^{-1}\circ f}:\bK\setminus \{0\} \to \bK\setminus \{0\}
\]
has the form
\[
(h^{-1}\circ f)(q)= \overline{(g^{-1}\circ f)(q)}=\frac{\bar q}{q^2}=\frac{1}{q}
\]
and hence it is a slice regular and slice preserving function.
\end{proof}

\subsection{The   helicoidal hypercomplex  manifold}\label{1.2} This further example generalizes to quaternions and octonions the case of the helicoid in the complex setting,  whose classical isothermal parameterization is given by
$
g: \C\cong \R^2 \to \R^3\cong \C\times \R
$
defined as
$g(x+iy)=(\sinh x \cos y +i\sinh x \sin y, y).$
\begin{proposition} \label{Elicoidale}
Let  the map
\begin{eqnarray*}
f: \bK \to \bK \times \Im(\bK)
\end{eqnarray*}
be defined by
\begin{eqnarray*}
f(x+Iy)=(\sinh x \cos y +I\sinh x \sin y, Iy)
\end{eqnarray*}
for $I\in \bS$, $x, y\in \R$. Then $f(\bK)$ is a parameterized hypercomplex Riemann
manifold (diffeomorphic to $\bK$) and $f$ is a slice isothermal parameterization. This manifold will be called \emph{quaternionic helicoidal manifold} if $\bK = \H$ or
\emph{octonionic helicoidal manifold} if $\bK = \Oc,$ and denoted by $\mathscr E$.
\end{proposition}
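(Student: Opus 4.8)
The plan is to exhibit $f$ as the slice curve induced by a pair of $\R$-stem functions and then apply Theorem \ref{potential_result} in the case $A = \R$. Set $G(x+iy) = \sinh x\cos y + \iota\,\sinh x\sin y$ and $H(x+iy) = \iota y$, so that $F = (G,H)$ has $F_1 = (\sinh x\cos y,\,0)$ and $F_2 = (\sinh x\sin y,\,y)$. From (\ref{intrinsic}) one checks immediately that $G$ and $H$ are complex intrinsic, and the slice curve they induce is precisely $f(x+Iy) = (\sinh x\cos y + I\sinh x\sin y,\; Iy)$. Since $H_1 \equiv 0$, the second coordinate lies in $\Im(\bK)$, so $f$ takes values in $\bK\times\Im(\bK)\cong\R^N$ with $N = 2\dim\bK - 1$.

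First I would verify the two hypotheses of Theorem \ref{potential_result}. For the conformality of $dF$ I would compute $\partial_x F = (\cosh x\cos y,\,0,\,\cosh x\sin y,\,0)$ and $\partial_y F = (-\sinh x\sin y,\,0,\,\sinh x\cos y,\,1)$; the identity $\cosh^2 x - \sinh^2 x = 1$ gives $|\partial_x F|^2 = |\partial_y F|^2 = \cosh^2 x$ and $\langle\partial_x F,\partial_y F\rangle = 0$, so $dF$ is conformal with nowhere-vanishing conformal factor $\cosh^2 x$. For hypothesis (b), I note that $\partial_y F_2 = (\sinh x\cos y,\,1)$ has second entry the constant $1$, while $F_2 = (\sinh x\sin y,\,y)$ has second entry $y$; hence $\partial_y F_2 \ne 0$ on $\R\cap\C$ and $F_2 \ne 0$ on $\C\setminus\R$.

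Next I would check injectivity of $F$, which in the case $A = \R$ is exactly the hypothesis Theorem \ref{potential_result} needs in order to conclude that $f$ is an injective immersion: if $F(x+iy) = F(u+iv)$, the last coordinate forces $y = v$, and then $\sinh x\cos y = \sinh u\cos y$ together with $\sinh x\sin y = \sinh u\sin y$ force $\sinh x = \sinh u$ (since $\cos y$ and $\sin y$ cannot vanish simultaneously), whence $x = u$. Theorem \ref{potential_result} then yields that $df_{|\C_I}$ and $df_{|\C_I^\perp}$ are conformal, that $df$ has full rank, and that $f$ is a slice conformal parameterization of $f(\bK)$; this establishes the hypercomplex Riemann manifold assertion.

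It remains to justify that $f(\bK)$ is diffeomorphic to $\bK$, for which I would upgrade the injective immersion to an embedding by writing down a continuous inverse. Given $(p,w)\in f(\bK)$ with $w\in\Im(\bK)$: if $w\ne 0$, the polar decomposition of the imaginary part recovers $I = w/|w|$ and $y = |w| > 0$, and then $p = \sinh x\,e^{Iy}\in\C_I$ gives $\sinh x = p\,e^{-Iy}\in\R$, hence $x = \operatorname{arcsinh}(p\,e^{-Iy})$; if $w = 0$ then $y = 0$ and $p = \sinh x\in\R$ gives $x = \operatorname{arcsinh} p$. Since $p\,e^{-Iy}\to p$ and $Iy\to 0$ as $w\to 0$, this inverse is continuous across the real axis, so $f$ is a homeomorphism onto its image and $f(\bK)$ is a smooth submanifold diffeomorphic to $\bK$. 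I expect the only genuinely delicate point to be this final continuity check at $w = 0$; verifying the hypotheses of Theorem \ref{potential_result} is routine once the stem function $F$ has been identified.
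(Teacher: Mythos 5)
Your proposal is correct and follows essentially the same route as the paper: it identifies the same stem map $F=(G,H)$ with $G(x+iy)=\sinh x(\cos y+\iota\sin y)$, $H(x+iy)=\iota y$, checks injectivity of $F$, conformality of $dF$, and the non-vanishing conditions on $F_2$ and $\partial_y F_2$, and then invokes Theorem \ref{potential_result} in the case $A=\R$. The only differences are that you verify the conformality of $dF$ by direct computation (the paper cites the classical helicoid parameterization) and you add an explicit continuous inverse to justify the parenthetical ``diffeomorphic to $\bK$'', a point the paper leaves implicit; both additions are sound.
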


\begin{proof}
The map $f$ is induced by the stem map
$$F = (G,H): \C \to( \R+\iota \R)^2,$$
$$G(x + iy) = \sinh x (\cos y +\iota \sin y),\, H(x + iy) =  \iota y$$
whose components are those of the classical conformal parametrization of the helicoid. We need to check that the assumptions of Theorem \ref{potential_result} hold.
The injectivity of $F$ is obvious since the last component is injective in $y$ and the first is injective in $x;$  moreover, $H_2(x + iy)  =  y \ne 0$ on $\C\setminus\R$ and $\partial_y H_{2}(x)=1 \ne 0$ on $\R$. Now, since as we said $dF$ is conformal, then by Theorem \ref{potential_result},
 the map $f$ is a slice conformal parameterization, and the proof is complete.
\end{proof}

It may be interesting to see how the use of the standard set of curves leads to the explicit calculation of the differential of the slice isothermal parameterization of the helicoidal manifold
$$f(x+Iy)=(\sinh x \cos y +I\sinh x \sin y, Iy).$$
For a fixed $x+Iy\in\C_I$, let us compute
$df(x + Iy)1$ and $df(x + Iy)I:$
\begin{eqnarray*}
df(x+Iy)1
&=& \frac{d}{dt}_{|_0}(\sinh (x+t) \cos y +I\sinh (x+t) \sin y, Iy)\\
&=& (\cosh x \cos y +I\cosh x \sin y, 0)\\
df(x+Iy)I
&=& \frac{d}{dt}_{|_0}(\sinh x \cos (y + t)+I\sinh x \sin (y+t), I(y+t))\\
&=& (-\sinh x \sin y +I\sinh x \cos y, I).
\end{eqnarray*}
Moreover,  from Proposition \ref{slice_dif} we know that for  $l = 2,\ldots,
\dim \bK-1$  \[df(x+Iy)I_l =\left(I_l\frac{\sinh x \sin
    y}{y},I_l\right).\]

\noindent In the case $\bK = \H$, if  we  set
$$\H \ni x_1+x_2I+x_3J+x_4K\cong (x_1, x_2, x_3, x_4) \in \R^4$$
and
\begin{eqnarray*}
\H\times \Im(\H)&\ni &(x_1+x_2I+x_3J+x_4K, y_2I+y_3J+y_4K)\\
&\cong& (x_1, x_2, x_3, x_4, y_2, y_3, y_4)\in \R^7 ,
\end{eqnarray*}
then, for $y\neq 0$, we get
\[
df(x+Iy)=\begin{bmatrix}
\cosh x\cos y & -\sinh x \sin y & 0 & 0\\
\cosh x \sin y &  \sinh x \cos y & 0 & 0\\
0 & 0 & \frac{\sinh x \sin y}{y} & 0 \\
0 & 0 & 0 &\frac{\sinh x \sin y}{y}\\
0 & 1 & 0 & 0\\
0 &0 & 1 & 0\\
0 & 0 & 0 & 1
\end{bmatrix}
\]
and for $y=0$, we have, taking the limit (and coherently with the use of the standard curves):
\[
df(x)=\begin{bmatrix}
\cosh x&0 & 0 & 0\\
0 &  \sinh x & 0 & 0\\
0 & 0 & \sinh x  & 0 \\
0 & 0 & 0 &\sinh x\\
0 & 1 & 0 & 0\\
0 &0 & 1 & 0\\
0 & 0 & 0 & 1
\end{bmatrix}.
\]
As expected, $df(x+Iy)$ is slice conformal and $df(x)$ is conformal. The case $\bK=\Oc$ is completely analogous.

\subsection{The catenoidal hypercomplex manifold}
The case of the catenoid in the complex setting, parameterized by the conformal map
$$g: \C\cong \R^2 \to \R^3\cong \C\times \R$$
defined by
$$g(x+iy)=(\cosh x \cos y +i\cosh x \sin y, x)$$
generalizes to  quaternions and  octonions   as well.
\begin{proposition} \label{catenoidale}
Let  the map
\begin{eqnarray*}
f:\R\times \bS(-\pi, \pi) \to \bK\times \R \cong \R^{\dim \bK + 1}
\end{eqnarray*}
be defined by
\begin{eqnarray*}
f(x+Iy)=(\cosh x \cos y +I\cosh x \sin y, x)
\end{eqnarray*}
Then $f(\R\times \bS(-\pi, \pi))$ is a parameterized
hypercomplex Riemann manifold and $f$ is a slice isothermal parameterization. This manifold will be called
\emph{quaternionic catenoidal manifold} if $\bK = \H$ or \emph{octonionic catenoidal manifold} if $\bK = \Oc$. 
\end{proposition}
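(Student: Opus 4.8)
The plan is to proceed exactly as in the proof of Proposition \ref{Elicoidale}, by exhibiting $f$ as the slice curve induced by a stem map and invoking the injective $A=\R$ version of Theorem \ref{potential_result}. First I would write $f$ as induced by the stem map $F=(G,H)\colon D\to(\R+\iota\R)^2$, where $G(x+iy)=\cosh x\,(\cos y+\iota\sin y)$ and $H(x+iy)=x$; here $D=\{x+iy:|y|<\pi\}$ is the symmetric horizontal strip whose symmetrization is $\R\times\bS(-\pi,\pi)$, and since all four component functions $G_1,G_2,H_1,H_2$ are real valued, this falls under the case $A=\R$. Note that $(G,H)$, read as a map into $\C\times\R\cong\R^3$, is precisely the classical conformal parametrization of the catenoid, a fact I will use directly.

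The three hypotheses then have to be checked. For injectivity of $F$: if $F(x+iy)=F(u+iv)$, the second component forces $x=u$, whence $\cosh x=\cosh u>0$, and the first component yields $\cos y=\cos v$ and $\sin y=\sin v$; since $y\mapsto(\cos y,\sin y)$ is injective on $(-\pi,\pi)$, this gives $y=v$. This is the one genuinely delicate point, and it is exactly what dictates the domain $\R\times\bS(-\pi,\pi)$: on a strip wider than $2\pi$ the angular part would cease to be injective, so the width restriction is essential rather than incidental. For the derivative condition, $F_2=(\cosh x\sin y,0)$, so $\partial_y F_2=(\cosh x\cos y,0)$ equals $(\cosh x,0)\neq 0$ on $\R\cap D$; and because $F$ is injective, the non-vanishing of $F_2$ off $\R$ is automatic by the Theorem (directly, $\cosh x\sin y\neq 0$ for $0<|y|<\pi$).

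It remains to verify that $dF$ is conformal. Writing $F(x,y)=(\cosh x\cos y,\cosh x\sin y,x)$, a short computation gives $\partial_x F=(\sinh x\cos y,\sinh x\sin y,1)$ and $\partial_y F=(-\cosh x\sin y,\cosh x\cos y,0)$, whence $|\partial_x F|^2=|\partial_y F|^2=\cosh^2 x$ and $\langle\partial_x F,\partial_y F\rangle=0$; thus $dF$ is conformal with never-vanishing factor $\cosh^2 x$. With $dF$ conformal, $\partial_y F_2\neq 0$ on $\R\cap D$, and $F$ injective all established, the injective version of Theorem \ref{potential_result} yields at once that $df(x+Iy)_{|\C_I}$ and $df(x+Iy)_{|\C_I^{\bot}}$ are conformal, that $f$ is an injective immersion, and hence that $f$ is a slice isothermal parameterization of $f(\R\times\bS(-\pi,\pi))$, as claimed. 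Since no single hypothesis is hard to verify, I do not expect a serious obstacle; the only point requiring care is the injectivity argument above. Optionally, exactly as was done after Proposition \ref{Elicoidale}, one can re-derive the block form of $df$ through the standard set of curves, obtaining $df(x+Iy)I_l=(I_l\cosh x\sin y/y,\,0)$ for $l\geq 2$ together with $df(x+Iy)1=(\sinh x\cos y+I\sinh x\sin y,1)$, which displays the slice conformality concretely and shows that $f$ is not fully conformal; but this computation is illustrative only and is not needed for the proof.
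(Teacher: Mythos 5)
Your proof is correct and follows essentially the same route as the paper: both exhibit $f$ as the slice function induced by the stem map $F=(G,H)$ with $G(x+iy)=\cosh x(\cos y+\iota\sin y)$ and $H(x+iy)=x$, check injectivity, the non-vanishing of $\partial_y F_2$ on the real axis, and the conformality of $dF$ (the paper simply cites that these are the components of the classical catenoid parametrization, where you verify it by direct computation), and then apply the $A=\R$ injective case of Theorem \ref{potential_result}. Your added detail on why the strip width $|y|<\pi$ is forced by injectivity is a correct and welcome elaboration of a step the paper labels ``obvious.''
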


\begin{proof}

The map $f$ is induced by the stem map
$$F = (G,H): \R\times (-i\pi, i\pi) \to( \R+\iota \R)^2,$$
\[G(x + iy) = \cosh x (\cos y +\iota \sin y), H(x + iy) = x,\]
whose components are those of the classical conformal parametrization of the catenoid.

Obviously the map $F$  is injective on $\R \times (-i\pi,i\pi)$, and
$G_2(x,y) \neq 0$ outside the real axis; moreover the derivative $\partial_y G_{2}(x,0) = \cosh x$ never vanishes. Since $dF$ is conformal, then by Theorem \ref{potential_result},  the map $f$ is a slice conformal parameterization. This completes the proof.
\end{proof}

Again, it may be interesting to explicitly present the differential of the slice isothermal parameterization of the catenoidal manifold
$$f(x+Iy)=(\cosh x \cos y +I\cosh x \sin y, x)$$
which may be computed by means of the standard set of curves.
In the case $\bK = \H,$ if we set
$$\H \ni x_1+x_2I+x_3J+x_4K\cong (x_1, x_2, x_3, x_4) \in \R^4$$
and
$$\H\times \R \ni (x_1+x_2I+x_3J+x_4K, y_1)
\cong (x_1, x_2, x_3, x_4, y_1)\in \R^5$$
then, for $y\neq0$, we have
\[
df(x+Iy)=\begin{bmatrix}
\sinh x\cos y & -\cosh x \sin y & 0 & 0\\
\sinh x \sin y &  \cosh x \cos y & 0 & 0\\
0 & 0 & \frac{\cosh x \sin y}{y} & 0 \\
0 & 0 & 0 &\frac{\cosh x \sin y}{y}\\
1 & 0 & 0 & 0
\end{bmatrix}.
\]
Moreover, for $y=0$, we  coherently obtain:
\[
df(x)=\begin{bmatrix}
\sinh x&0 & 0 & 0\\
0 &  \cosh x & 0 & 0\\
0 & 0 & \cosh x  & 0 \\
0 & 0 & 0 &\cosh x\\
1 & 0 & 0 & 0
\end{bmatrix}.
\]
Again, $df(x+Iy)$ is slice conformal and $df(x)$ is conformal. In the case of octonions we obtain similar matrices.

As in the real case, once both naturally  embedded in $\mathbb{K}^2,$ the  catenoidal hypercomplex manifold can be transformed to a part of an 
helicoidal hypercomplex manifold through a family of parameterized hypercomplex Riemann manifolds.\\

Let the part of the helicoidal manifold embedded in $\mathbb{\bK}^2$ be parameterized by $h : \R\times \bS(-\pi, \pi) \rightarrow \bK^2,$ induced by the stem map $$H(x + iy) := (\sinh x (\cos y +\iota \sin y), \iota y)$$ and the embedded catenoidal manifold parameterized by $c : \R\times \bS(-\pi, \pi) \to \bK^2,$ induced by the stem map $$C(x + iy) := (\cosh x(\cos y +\iota \sin y), x).$$
We claim that
\[
H_{\theta} := H \cos\theta + C \sin\theta,\,  \theta \in [0,\pi/2]
\]
defines a family of conformal injective immersions with
$H_0 = H,$ $H_{\pi/2 } = C.$

The  differential
$dH_{\theta}: \C \to \bK \times\bK \cong  \R^{2\dim\bK}$
is given by

\[
dH_{\theta}(x+iy)=\begin{bmatrix}
A \cos y & - B\sin y \\
A\sin y &  B \cos y \\
0 & 0   \\
\vdots&\vdots\\
0 & 0   \\
\sin \theta & 0\\
0 & \cos \theta \\
0 & 0   \\
\vdots&\vdots\\
0 & 0
\end{bmatrix},
\]
where $A = (\cosh x \cos \theta + \sinh x \sin \theta)$ and $B =
(\cosh x \sin \theta + \sinh x \cos \theta).$ It is obvious that the
columns are orthogonal  to each other, and a direct computation shows that 
their norms are equal.  If we
write $H_{\theta} = (F_{\theta},G_{\theta}),$ then $G_{\theta}(x + iy)
= x\sin \theta +\iota y \cos \theta.$ If $x\sin \theta +\iota y \cos
\theta = u\sin \theta + \iota v \cos \theta,$ either $\theta = 0$ (and
then we have the stem function for the helicoidal manifold, for
which the injectivity has already been proved), or $x =u.$ Then either
$\theta = \pi/2$ (in which case we have the stem map for the catenoidal
surface) or else $\iota y = \iota v $ so $x + iy = u + iv.$  As a consequence, $G_{\theta, 2}$ is injective for all $\theta \in (0,\pi /2).$ Because
$\partial_y G_{\theta,2}(x)= \cos \theta \ne 0$  on the real axis for all $\theta \in
(0,\pi/2),$ all the conditions of Theorem
\ref{potential_result} are fulfilled and hence $H_{\theta}$ induces a
family of slice conformal injective immersions.

\section{The hypercomplex logarithm and $n$-th root}
\label{sec:log}

\subsection{The hypercomplex logarithm}

To define the complex logarithm one usually uses either the helicoid or the graph of the exponential function. Since we have shown that the latter in case of $\bK$ is a hypercomplex manifold, the logarithm can be defined using the projection on the second coordinate (compare Remark \ref{remark log}).

We will show here how  the helicoidal  hypercomplex manifold defined in the previous section can be adapted to be the natural domain for the definition of a quaternionic logarithm.  Compared to the logarithm defined by the graph of exponential function, this definition facilitates the identification of the argument and is therefore easier to use in the constructions which include continuations of the logarithm.

\begin{proposition} Let  $f: \bK \to \bK\times \Im(\bK)$ be the map
defined by
\begin{eqnarray*}
f(x+Iy)=(\sinh x \cos y +I\sinh x \sin y, Iy)
\end{eqnarray*}
for $I\in \bS$, $x, y\in \R$. Let $\bK^+=\{q\in \bK : \Re\ q >0\}$, and set $\mathscr E_\bK^+:=f(\bK^+)$.
\noindent The  $\mathscr E_\bK^+$-\,exponential map 
\[
E : \bK \to \mathscr E_\bK^+ \subset \bK \times \Im(\bK)
\]
defined by:
\[
E(x+Iy) = (\exp (x + Iy), Iy) = (\exp x \cos y + I\exp x \sin y, Iy)
\]
is an immersion and a diffeomorphism between $\bK$ and $\mathscr E_\bK^+.$ In the case of quaternions, it endows $\mathscr E_\H^+$ with a structure of slice quaternionic manifold (see, e.g., \cite{GGS}), which is different from the structure of hypercomplex Riemann manifold defined in Proposition \ref{Elicoidale}. However, this manifold will be denoted simply by $\mathscr E_\bK^+$, and called \emph{the logarithm manifold}.

\end{proposition}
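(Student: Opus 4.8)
The plan is to realize $E$ as a reparametrization of the helicoidal parametrization $f$ of Proposition~\ref{Elicoidale}, restricted to $\bK^+$. Concretely, I would introduce the map
\[
\phi:\bK\to\bK^+,\qquad \phi(q)=\sinh^{-1}\!\big(e^{\Re q}\big)+\Im q ,
\]
so that $\phi(x+Iy)=\sinh^{-1}(e^{x})+Iy$. Written in this intrinsic form $\phi$ is manifestly well defined on all of $\bK$ (no choice of $I,y$ enters, so the identification $x+Iy=x+(-I)(-y)$ is automatically respected), and it is smooth: $\Re q$ and $\Im q$ are smooth functions of $q$ and $t\mapsto\sinh^{-1}(e^{t})$ is smooth on $\R$, so $\phi$ is smooth including across the real axis. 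Its real part $\sinh^{-1}(e^{x})$ is strictly positive for every $x$, hence $\phi$ takes values in $\bK^+$, and $\xi+Iy\mapsto \log(\sinh\xi)+Iy$ is a smooth inverse; therefore $\phi$ is a diffeomorphism of $\bK$ onto $\bK^+$.

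Next I would verify the identity $E=f|_{\bK^+}\circ\phi$ by direct substitution, using $\sinh(\sinh^{-1}(e^{x}))=e^{x}$:
\[
f\big(\phi(x+Iy)\big)=\big(\sinh(\sinh^{-1}(e^{x}))\cos y+I\sinh(\sinh^{-1}(e^{x}))\sin y,\ Iy\big)=(e^{x}\cos y+Ie^{x}\sin y,\ Iy)=E(x+Iy).
\]
By Proposition~\ref{Elicoidale} the helicoidal parametrization $f:\bK\to f(\bK)=\mathscr E$ is a slice isothermal parameterization, that is an injective immersion realizing a diffeomorphism of $\bK$ onto $\mathscr E$; restricting to the open set $\bK^+$ yields a diffeomorphism $f|_{\bK^+}:\bK^+\to f(\bK^+)=\mathscr E_\bK^+$. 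Thus $E=f|_{\bK^+}\circ\phi$ is a composition of two diffeomorphisms, hence a diffeomorphism of $\bK$ onto $\mathscr E_\bK^+$; in particular it is an immersion, and $E(\bK)=f|_{\bK^+}(\bK^+)=\mathscr E_\bK^+$ follows for free since $\phi$ is onto $\bK^+$. This settles the immersion and diffeomorphism claims uniformly for $\bK=\H$ and $\bK=\Oc$.

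For the quaternionic statement I would argue that the single global chart $E$ (equivalently the coordinate map $E^{-1}$) is a one-chart atlas, whose transition data is trivial and hence trivially slice regular, so it endows $\mathscr E_\H^+$ with a slice quaternionic manifold structure in the sense of \cite{GGS}. I would then show this differs from the structure carried through the helicoidal chart of Proposition~\ref{Elicoidale}: the two would coincide precisely when the change of parametrization $\phi=(f|_{\bK^+})^{-1}\circ E$ is slice regular, but $\phi$, although slice preserving, is \emph{not} slice regular. Indeed, on each slice $\C_I$ the restriction $x+Iy\mapsto \sinh^{-1}(e^{x})+Iy$ has real part depending only on $x$ and imaginary part $y$, and it violates the Cauchy--Riemann equation~(\ref{sr}) because $\partial_x\sinh^{-1}(e^{x})=e^{x}/\sqrt{1+e^{2x}}\neq 1=\partial_y y$ for every $x$. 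Hence the identity map of $\mathscr E_\H^+$ is not a morphism between the two slice structures, and they are genuinely distinct.

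The main obstacle is the last paragraph: one must pin down in what precise sense two single-chart structures on the same underlying smooth manifold are being compared, and make clear that the distinguishing feature is the \emph{slice regularity} (not the mere smoothness) of the transition $\phi$. The only genuinely checkable points are that $\phi$ is well defined and smooth through the real axis, and the short computation that it fails slice regularity; everything else reduces to the already-established properties of $f$ from Proposition~\ref{Elicoidale}.
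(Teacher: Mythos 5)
Your proof is correct, but it takes a genuinely different route from the paper's. The paper disposes of this proposition in one line --- ``the proof replicates part of the one of Proposition \ref{Elicoidale}'' --- meaning that one is expected to rerun the stem-function/standard-curves analysis directly on $E$, whose stem map is $z\mapsto(\exp z,\iota\,\Im z)$: injectivity comes from reading off $y$ in the second component and then $e^{x}$ in the first, and the immersion property from the explicit differential. You instead factor $E=f|_{\bK^{+}}\circ\phi$ with $\phi(q)=\sinh^{-1}(e^{\Re q})+\Im q$, an explicit slice-preserving diffeomorphism of $\bK$ onto $\bK^{+}$, and import everything (injectivity, immersion, diffeomorphism onto the image, and the equality $E(\bK)=f(\bK^{+})=\mathscr E_\bK^{+}$) from Proposition \ref{Elicoidale}. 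This buys a shorter, more structural argument and, as a bonus, makes the final claim about the two structures being different completely concrete: the transition map between the two one-chart atlases is exactly $\phi$, and your computation $\bar\partial_I\phi=\tfrac12\bigl(e^{x}/\sqrt{1+e^{2x}}-1\bigr)\neq0$ shows it is not slice regular, so the atlases are incompatible in the sense of \cite{GGS}. What the paper's (implicit) direct computation buys instead is the observation that $dE(x+Iy)_{|\C_I}$ has columns of norms $e^{x}$ and $\sqrt{1+e^{2x}}$, so $E$ itself fails to be slice isothermal --- an alternative, equally concrete way to see that the $E$-structure is not the hypercomplex Riemann structure of Proposition \ref{Elicoidale}. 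Both arguments are sound; yours is self-contained modulo Proposition \ref{Elicoidale} and arguably cleaner.
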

\begin{proof}
The proof replicates part of the one of Proposition \ref{Elicoidale}.
\end{proof}

\begin{remark} \emph{ (a) 
If $\pi : \bK \times \Im(\bK) \to \bK$ denotes the projection on the first factor, then by definition the following equality holds
\[
(\pi \circ E)(q)= \exp(q)
\]
for all $q\in \bK$.\\
\noindent (b) Unlike what happens in the complex setting, the map $\pi :\mathscr E_\bK^+ \to \bK$ is not a covering. It is not an open map as well, due to the fact that $\exp :\bK \to \bK$ is not an open map (it has  a  non--empty degenerate set consisting of  spheres).}
\end{remark}


We will now define the $\mathscr E_\bK^+$-\,logarithm on  $\mathscr E_\bK^+$ and exhibit some of its properties.

\begin{definition} Let $\mathscr E_\bK^+$ be the logarithm manifold.
The $\mathscr E_\bK^+$-\,logarithm
\[
L : \mathscr E_\bK^+ \subset \bK \times \Im(\bK)   \to \bK
\]
 is defined as follows, in terms of the real logarithm  $\log$,
\[
L (q, p) := \log |q| + p,
\]
where $p$ is called the \emph{argument} of $q$, and denoted by $\Arg(q)$:  hence $$L (q, p) := \log |q| + \Arg(q).$$
Indeed, if $(q,p)\in \ \mathscr E_\bK^+$, then $q=r\exp p$ for $r=|q|$ and $L$ can be rewritten as
\[
L (r\exp p, p) = \log r + p. 
\]
\end{definition}
 The following result and definition explain why the logarithm manifold  is a natural domain of definition for the $\mathscr E_\bK^+$-\,logarithm. Indeed,  this hypercomplex manifold plays the role of an ``adapted" blow-up of $\bK$ at points $x\in \R$ with $x\neq 0$.
 
\begin{proposition}
The map
\[
L : \mathscr E_\bK^+ \to \bK
\]
is the inverse of the $\mathscr E_\bK^+$-\,exponential $E$, and a diffeomorphism from  $\mathscr E_\bK^+$ to $\bK$.
\end{proposition}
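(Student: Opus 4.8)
The plan is to show that $L$ and the $\mathscr E_\bK^+$-exponential $E$ are mutually inverse, and then to upgrade this to a diffeomorphism by invoking the preceding proposition, which already asserts that $E$ is a diffeomorphism between $\bK$ and $\mathscr E_\bK^+$. Since $E$ is in particular a bijection onto $\mathscr E_\bK^+=E(\bK)$, it suffices to exhibit $L$ as an inverse of $E$, after which $L=E^{-1}$ inherits smoothness in both directions from $E$. So the whole proof reduces to two elementary verifications plus a reference to the earlier result.

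First I would compute $L\circ E$ directly on $\bK$. For $q=x+Iy$ one has $E(x+Iy)=(\exp(x+Iy),Iy)=(\exp x\,(\cos y+I\sin y),Iy)$, and since $\exp(x+Iy)$ lies in the single slice $\C_I$, its modulus is $|\exp(x+Iy)|=\exp x\,\sqrt{\cos^2y+\sin^2y}=\exp x$. Hence
\[
(L\circ E)(x+Iy)=\log|\exp(x+Iy)|+Iy=\log(\exp x)+Iy=x+Iy,
\]
so $L\circ E=\mathrm{id}_\bK$. Because $E$ is a bijection onto $\mathscr E_\bK^+$, this single identity already forces $L$ to coincide with $E^{-1}$ on all of $\mathscr E_\bK^+$.

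For completeness, and to see transparently that $E\circ L=\mathrm{id}_{\mathscr E_\bK^+}$, I would use the structural description of the manifold recorded just above the statement: every $(q,p)\in\mathscr E_\bK^+$ satisfies $q=|q|\exp(p)$ with $p$ purely imaginary, say $p=Iy$. Here one checks that $\exp(p)$ is intrinsically defined and that the slice expression $\exp(Iy)=\cos y+I\sin y$ is insensitive to the sign ambiguity $Iy=(-I)(-y)$, so $q=|q|\exp(p)$ is unambiguous; moreover $|q|=\exp x>0$ guarantees $q\neq 0$, making $\log|q|$ well defined and $L$ smooth on $\mathscr E_\bK^+$. Then $L(q,p)=\log|q|+Iy$ and
\[
E(L(q,p))=\big(\exp(\log|q|+Iy),\,Iy\big)=\big(|q|\,(\cos y+I\sin y),\,Iy\big)=\big(|q|\exp(p),\,p\big)=(q,p).
\]

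Finally I would conclude that $L=E^{-1}$, and since $E$ is a diffeomorphism, so is $L$. The only genuinely delicate point is the step $E\circ L=\mathrm{id}$, which rests on the identity $q=|q|\exp(p)$ characterizing the points of $\mathscr E_\bK^+$; this is where the specific geometry of the logarithm manifold (the matching of the images $f(\bK^+)$ and $E(\bK)$, and the sign ambiguity in $p=Iy$) has to be handled with care. Everything else reduces, slice by slice, to the elementary complex identity $\log|\exp z|=\Re z$, so the non-commutativity and non-associativity of $\bK$ play no role.
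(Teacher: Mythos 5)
Your proof is correct and follows essentially the same route as the paper: both verify $L\circ E=\mathrm{id}_{\bK}$ via the slice computation $|\exp(x+Iy)|=\exp x$ and $E\circ L=\mathrm{id}_{\mathscr E_\bK^+}$ via the representation $(q,p)=(r\exp p,\,p)$ of points of the logarithm manifold, with the diffeomorphism property inherited from the preceding proposition on $E$. Your additional remarks (that $L\circ E=\mathrm{id}$ alone pins down $L=E^{-1}$ given bijectivity of $E$, and the care with the sign ambiguity $Iy=(-I)(-y)$) are sound refinements of the same argument.
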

\begin{proof}

Let us read the $\mathscr E_\bK^+$-\,logarithm through the parameterization
\[
E(x+Iy)=(\exp x \cos y +I\exp x \sin y, Iy)
\]
of $\mathscr E_\bK^+$. By composition we get that $L \circ E$ becomes the identity map of $\bK$
\[
x+Iy \mapsto (\exp x (\cos y +I \sin y), Iy) \mapsto \log (\exp x) + Iy = x+Iy
\]
Analogously, $E \circ L $ becomes the identity map of $\scrE_\bK^+$
\[
(r\exp p, p) \mapsto \log r + p \mapsto (\exp (\log r)\exp p, p) \mapsto (r\exp p, p).
\]
The assertion is now proved.
\end{proof}
As a consequence,  in the case of quaternions, the map $L$ is a slice regular map from  the logarithm manifold $\mathscr E_\H^+$ to $\H$, with respect to the structure of slice regular manifold induced by $E$ on $\mathscr E_\H^+$ (see, e.g., \cite{GGS}). We point out that the definition of the $\mathscr E_\bK^+$-\,logarithm $L$ is not referred to the structure of  helicoidal Riemann manifold defined on $\mathscr E_\bK$ in Proposition \ref{Elicoidale}.
\begin{definition}\label{det1log}
Let
$
\pi :  \mathscr E_\bK^+  \subset \bK \times \Im(\bK)  \to \bK\setminus \{0\}
$
denote the natural projection
\[
(q, p) \mapsto q
\]
and let $\Omega \subset \mathscr E_\bK^+$ be  a path connected subset such that $\pi_{|_{\Omega}}$ is injective. Then, the map 
\[
\log_\bK : \pi(\Omega) \to \bK
\]
defined by
\[
\log_\bK q = L( \pi_{|_{\Omega}}^{-1}(q) )
\]
is called a \emph{branch or a determination of  the hypercomplex logarithm} on $\pi(\Omega)$. 
\end{definition}
Notice that, as expected, with the notations of  Definition \ref{det1log} we have that
\[
\exp (\log_\bK q)= \pi (E  (L( \pi_{|_{\Omega}}^{-1}(q) )))=\pi (\pi_{|_{\Omega}}^{-1}(q) )=q
\]
for all $q$ in $\pi(\Omega)$

\begin{remark}\emph{
It is important to notice that, unlike what happens in the case of the complex logarithm, and with the exception of the principal branch  (see, e.g., \cite[Definition 3.4]{Gen-Vig}),  no continuous branch of the hypercomplex logarithm can be defined on any open set $A\subset \bK\setminus \{0\}$ which contains a strictly positive real point  $x_0$, and hence a small segment $(x_0-\epsilon, x_0+\epsilon) \subset \R^+$. Indeed, for any $I\in \bS$, on each slice $A_I$, the branches of the hypercomplex logarithm coincide with those of the complex logarithm of the slice $\C_I$. As a consequence, there is no choice of $J\in \bS$ along $(x_0-\epsilon, x_0+\epsilon) \subset \R^+$ which can make a (non principal) branch of the hypercomplex logarithm a continuous function.\\
\indent On the other hand, if $A\subset \C_I\setminus \{0\} \subset \bK\setminus \{0\}$ is simply connected, any continuous branch of the hypercomplex logarithm
along $A$ coincides with the appropriate branch of the complex logarithm along $A$. 
In particular, this happens when  $\alpha: [-a, a]
\to \C_{I}\setminus \{0\} \subset \bK\setminus \{0\}$ is a
continuous curve having its image in a small disc $\Delta$ centered at
a non zero real point $x$ with $\Delta \subset \C_{I}\setminus\{0\}$, and such that
$\alpha(0)=x$. 
 We will
address this issue in a forthcoming paper.  
}
\end{remark}


We conclude this section by pointing out a different possible definition of the hypercomplex Riemann manifold on which to define  the hypercomplex logarithm.

\begin{remark}\label{remark log} \emph{
The definition of a hypercomplex logarithm could be given, alternatively, using the graph of the exponential function
\[
\Gamma(\exp) = \{ (q, \exp q) : q\in \bK\}
\]
which has a natural structure of hypercomplex Riemann manifold (see
Subsection \ref{sec:curves}), with the function $f(q)=(q, \exp q)$ as
a slice isothermal parameterization. Indeed the logarithm could be 
defined as the slice regular function from the ``reversed" graph $
\Lambda(\exp) = \{ (\exp w, w) : w\in \bK\} $ onto $\bK$, coinciding
with the projection onto the second factor. The advantage of the
approach that we actually adopted in this paper stays also in that it calls
into scenery the helicoidal and logarithm manifolds, which
more closely follow the path of the complex setting.
  }
\end{remark}

\subsection{The hypercomplex  $n$-th root}\label{sec:root}
To give a proper definition of the $n$-th root function over the quaternions and octonions, we will first of all define a suitable hypercomplex Riemann manifold, which will be useful to find a possible domain for such a function.
\begin{proposition} \label{radice} Let $n\in \mathbb N$, with $n>1$,  and
let  the map
\[
f: \R\times \bS(-\pi n,\pi n) \to \bK\times \bK\cong \R^{2\dim \bK}
\]
be defined by
\[
f(x+Iy)=(\sinh x \cos y +I\sinh x \sin y, n\exp(I\frac{y}{n}))
\]
for $I\in \bS$, $x ,y\in \R$. Then $f(\R\times \bS(-\pi n,\pi n))$ is a parameterized Riemann
hypercomplex manifold (diffeomorphic to $\R\times \bS(-\pi n,\pi n)$) and $f$ is a slice isothermal parameterization. This manifold will be denoted by $\mathscr Q_\bK (n)$.
\end{proposition}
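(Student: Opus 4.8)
The plan is to realise $f$ as the slice curve induced by a pair of stem functions and then to run the verification machinery of Theorem \ref{potential_result} in the case $A=\R$, exactly as was done for the helicoidal and catenoidal manifolds. First I would set $D=\R\times(-i\pi n,i\pi n)$, a symmetric domain with $\widetilde D=\R\times\bS(-\pi n,\pi n)$, and write $f=(g,h)$ as the slice function induced by the stem map $F=(G,H)\colon D\to A_\C^2$ with $A=\R$, where
\[
G(x+iy)=\sinh x\,(\cos y+\iota\sin y),\qquad H(x+iy)=n\exp(\iota y/n)=n\cos(y/n)+\iota\, n\sin(y/n).
\]
Both $G$ and $H$ are $\mathcal C^\infty$ and complex intrinsic, so $F$ is an admissible $C^3$ stem map. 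By the $A=\R$ part of Theorem \ref{potential_result}, it then suffices to establish three facts: that $F$ is injective on $D$, that $dF$ is conformal on $D$, and that $\partial_y F_2\neq 0$ on $\R\cap D$.

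For injectivity, I would argue as follows. The component $H$ depends only on $y$, and since $y/n$ ranges over $(-\pi,\pi)$ on $D$, the map $y\mapsto\exp(\iota y/n)$ is injective; hence $H(x+iy)=H(u+iv)$ forces $y=v$. Substituting $y=v$ into $G(x+iy)=G(u+iv)$ and using $|\cos y+\iota\sin y|=1$ yields $\sinh x=\sinh u$, so $x=u$. I expect this to be the main, and indeed essentially the only, delicate point, since injectivity hinges precisely on the angular restriction $\bS(-\pi n,\pi n)$; this is exactly why $\mathscr Q_\bK(n)$ is defined on that range and not on all of $\bK$.

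Next I would verify conformality by computing $\partial_x F=(\cosh x\cos y,\ \cosh x\sin y,\ 0,\ 0)$ and $\partial_y F=(-\sinh x\sin y,\ \sinh x\cos y,\ -\sin(y/n),\ \cos(y/n))$. The factor $n$ in $H=n\exp(\iota y/n)$ is placed precisely so that $\partial_x H=0$ while $\partial_y H=\iota\exp(\iota y/n)$ has modulus $1$; consequently $|\partial_x F|^2=\cosh^2x=\sinh^2x+1=|\partial_y F|^2$, and $\langle\partial_x F,\partial_y F\rangle=0$ because the helicoidal $G$-block contributes an orthogonal pair and the $H$-block contributes nothing to the inner product. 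Thus $dF$ is conformal with never-vanishing conformality factor $k(x,y)=\cosh^2x$. For the last condition, on $\R\cap D$ one has $\partial_y F_2(x)=(\partial_y G_2(x),\partial_y H_2(x))=(\sinh x,1)$, whose second entry is $1\neq0$, so $\partial_y F_2$ never vanishes there.

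With injectivity, conformality of $dF$, and the non-vanishing of $\partial_y F_2$ on the real axis all in hand, Theorem \ref{potential_result} (case $A=\R$) immediately gives that $f$ is an injective immersion whose restrictions $df|_{\C_I}$ and $df|_{\C_I^{\perp}}$ are both conformal, i.e.\ a slice conformal (isothermal) parameterization of $f(\widetilde D)=\mathscr Q_\bK(n)$; injectivity together with the full rank of $df$ then exhibits $f$ as a diffeomorphism onto its image, so $\mathscr Q_\bK(n)$ is diffeomorphic to $\R\times\bS(-\pi n,\pi n)$. Finally, since the entire stem-function computation is insensitive to which algebra is used, the octonionic case $\bK=\Oc$ is identical to the quaternionic one.
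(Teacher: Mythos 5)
Your proposal is correct and follows essentially the same route as the paper: both realise $f$ as the slice curve induced by the stem map $F=(G,H)$ with $G(x+iy)=\sinh x(\cos y+\iota\sin y)$ and $H(x+iy)=n\exp(\iota y/n)$, verify injectivity of $F$ (via the angular restriction to $(-\pi n,\pi n)$), conformality of $dF$, and $\partial_y F_2\neq 0$ on the real axis, and then invoke Theorem \ref{potential_result}. The only difference is that you verify the conformality of $dF$ by explicit computation, while the paper simply identifies $F$ with the classical conformal parameterization of the Riemann surface of the $n$-th root.
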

%

\begin{proof}
The map $f=(g, h) $ is induced by the stem map
\begin{eqnarray*}
F &=& (G,H) :\R\times (-i\pi n, i\pi n) \to( \R+\iota \R)^2,\\
G(x+iy) &=& \sinh x \cos y +\iota \sinh x \sin y, \, H(x + iy) = n\exp(\iota \frac{y}{n}))
\end{eqnarray*}
whose components are those of the classical conformal parameterization of the Riemann surface of the $n$-th root.
The map $F$ is $C^\infty$ and injective: indeed $H(x+iy)=H(u+iv)$ implies
\[
\exp(\iota \frac{y}{n})=\exp(\iota\frac{v}{n})
\]
whence $\frac{y-v}{n}=2\pi m$ for some integer $m$. Hence $y-v=2\pi nm$ implies $y=v$. Since $G$ is injective in $x,$ we now deduce $x=u$. The injectivity of $F$ is then proved. Because $H_2(x + iy) = n \sin \frac{y}{n},$ we have $\partial_y H_{2}(x) = 1 \ne 0.$

Since, as we said, $dF$ is conformal, then by  Theorem \ref{potential_result} the map $f$ is a slice conformal parameterization, and the proof is complete.
\end{proof}

Again, it is of interest to explicitly compute the differential of the slice conformal parameterization
$$f(x+Iy)=(\sinh x \cos y +I\sinh x \sin y, n\exp(I\frac{y}{n})).
$$
Since the first component of $f$ has already been analyzed in Subsection \ref{1.2}, we will only  compute the differential of  the function $n\exp(I\frac{y}{n}):$

\begin{eqnarray*}
dh(x+Iy)1&=& \frac{d}{dt}_{|_0}(n\exp(I\frac{y}{n}))= 0\\
dh(x+Iy)I &=& \frac{d}{dt}_{|_0}n\exp(I\frac{y+t}{n})\\
&=&  -\sin (\frac{y}{n})+I\cos(\frac{y}{n}).
\end{eqnarray*}

\noindent In the case $\bK = \H,$ if we set
$$\H \ni x_1+x_2I+x_3J+x_4K\cong (x_1, x_2, x_3, x_4)\in \R^4$$
and
\begin{eqnarray*} \H\times \H &\ni& (x_1+x_2I+x_3J+x_4K, y_1+ y_2I+y_3J+y_4K)\\
&\cong& (x_1, x_2, x_3, x_4, y_1, y_2, y_3, y_4)\in\R^8
\end{eqnarray*}
then, for $y\neq 0$, we have
\[
df(x+Iy)=\begin{bmatrix}
\cosh x\cos y & -\sinh x \sin y & 0 & 0\\
\cosh x \sin y &  \sinh x \cos y & 0 & 0\\
0 & 0 & \frac{ \sinh x \sin y}{y} & 0 \\
0 & 0 & 0& \frac{ \sinh x \sin y}{y}\\
0 & -\sin (\frac{y}{n})& 0 & 0\\
0 &\cos (\frac{y}{n})& 0  & 0\\
0 & 0 & \frac{n\sin (y/n)}{y}  & 0\\
0 & 0 & 0 & \frac{n\sin (y/n)}{y}\\
\end{bmatrix}
\]
and for $y=0$, we coherently obtain:
\[
df(x)=\begin{bmatrix}
\cosh x&0 & 0 & 0\\
0 &  \sinh x & 0 & 0\\
0 & 0 & \sinh x  & 0 \\
0 & 0 & 0 &\sinh x\\
0 & 1 & 0 & 0\\
0 &0 & 1 & 0\\
0 & 0 & 0 & 1
\end{bmatrix}.
\]
As expected, $df(x+Iy)$ is slice conformal and $df(x)$ is conformal. The situation in the case $\bK=\Oc$ is totally analogous.
\vskip .2cm

We will now see how to use $\mathscr Q_\bK(n)$ to construct an appropriate domain for the quaternionic or octonionic $n$-th root function.

\begin{proposition}\label{different structure} Let $f(x+Iy)=(\sinh x \cos y +I\sinh x \sin y, n\exp(I\frac{y}{n}))$ be as in Proposition \ref{radice}, and let us set $$\mathscr Q_\bK^+(n):=f(\R^+\times \bS(-\pi n,\pi n)).$$
The map
\[
\phi_n : \R^+\times \bS(-\pi n,\pi n)) \to \mathscr Q_\bK^+(n)
\]
defined by
\begin{eqnarray*}
&&\phi_n(x+Iy) = (\exp (x + Iy), n\exp(I\frac{y}{n}))
\end{eqnarray*}
is an injective immersion and a diffeomorphism between $\R^+\times \bS(-\pi n,\pi n) $ and $\mathscr Q_\bK^+(n)$.   Indeed, in the case of quaternions, $\phi_n$  defines on $\mathscr Q_\H^+(n)$ a structure of slice regular manifold (see \cite{GGS}) different from the one induced by the contruction of Proposition \ref{radice}. However, this manifold will be denoted simply by $\mathscr Q_\bK^+(n)$, and called \emph{the $n$-th root manifold}.
\end{proposition}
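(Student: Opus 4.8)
The plan is to realize $\phi_n$ as a reparameterization of the slice isothermal parameterization $f$ of Proposition \ref{radice}, and then to transport the conclusions of that proposition. The two maps share the \emph{same} second component $n\exp(I\tfrac{y}{n})$, and their first components differ only in that $\sinh x$ is replaced by $\exp x$; since the equation $\sinh \tilde x = \exp x$ has the unique real solution $\tilde x = \sinh^{-1}(\exp x)$ on the relevant ranges, I would introduce the real map $\psi_0(x):=\sinh^{-1}(\exp x)$ and the associated change of variable $\Psi(x+Iy):=\psi_0(x)+Iy$. Using $\sinh(\psi_0(x))=\exp x$ one gets at once
\[
f(\Psi(x+Iy)) = \left(\exp x\,(\cos y + I\sin y),\, n\exp(I\tfrac{y}{n})\right) = \phi_n(x+Iy),
\]
so that $\phi_n = f \circ \Psi$.

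Next I would check that $\Psi$ is a diffeomorphism of slice domains. The real map $\psi_0$ is smooth with $\psi_0'(x)=\exp x/\sqrt{1+(\exp x)^2}>0$, and it is a strictly increasing bijection of $\R$ onto $(0,\infty)=\R^+$, with smooth inverse $\tilde x\mapsto \log(\sinh\tilde x)$; since $\Psi$ alters only the real coordinate and fixes the pair $(I,y)$, it is a diffeomorphism of $\R\times\bS(-\pi n,\pi n)$ onto $\R^+\times\bS(-\pi n,\pi n)$. This is exactly the point at which the real part must be allowed to range over all of $\R$, so that $\exp x$ sweeps out $(0,\infty)=\sinh(\R^+)$, in complete analogy with the exponential map $E$ of the logarithm manifold, which is likewise defined for all real parts. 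By Proposition \ref{radice}, $f$ is a diffeomorphism onto its image; restricting it to the open set $\R^+\times\bS(-\pi n,\pi n)$ gives an injective immersion and a diffeomorphism onto $f(\R^+\times\bS(-\pi n,\pi n))=\mathscr Q_\bK^+(n)$. Composing, $\phi_n=f\circ\Psi$ is an injective immersion and a diffeomorphism of $\R\times\bS(-\pi n,\pi n)$ onto $\mathscr Q_\bK^+(n)$, which is the assertion.

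The step I expect to be the genuine obstacle is the behaviour along the real axis $y=0$, where the parameterizing sphere collapses and the representation $x+Iy$ loses the datum $I$: there one must verify that the inverse of $\phi_n$ is smooth across the real points, i.e.\ that $\phi_n$ is an embedding and not merely an injective immersion. The reparameterization is designed precisely to sidestep this, since all of the real-axis delicacy is already contained in $f$ and was resolved in Proposition \ref{radice}, while $\Psi$ is manifestly a diffeomorphism through $y=0$. As an independent check one could argue directly, computing $d\phi_n$ with the standard set of curves (obtaining the matrix of Proposition \ref{radice} with $\cosh x,\sinh x$ replaced by $\exp x$, whose columns still split along $\C_I\oplus\C_I^{\bot}$ and have full rank once $y\in(-\pi n,\pi n)\setminus\{0\}$ forces $\sin(y/n)\ne 0$), and exhibiting the explicit inverse recovered from $\exp x=|P|$ and $\exp(I\tfrac{y}{n})=Q/n$. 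Finally, for the ``different structure'' clause I would observe that the change of parameter $\Psi$ relating $\phi_n$ to $f$ is slice preserving but \emph{not} slice regular (its stem $\psi_0(x)+\iota y$ violates the Cauchy--Riemann equations, as $\psi_0'(x)\ne 1$); hence, by the mechanism of Remark \ref{change par} and the remark following it, $\phi_n$ and $f$ do not induce the same structure on $\mathscr Q_\bK^+(n)$, and in the quaternionic case $\phi_n$ realizes the slice quaternionic manifold structure of \cite{GGS}, distinct from the hypercomplex Riemann structure of Proposition \ref{radice}.
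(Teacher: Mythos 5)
Your proof is correct, but it takes a genuinely different route from the paper's. The paper's proof is the single sentence ``The proof replicates the one used to establish Proposition \ref{radice}'': that is, one verifies directly for the stem map $\Phi_n=(G,H)$, $G(x+iy)=\exp x(\cos y+\iota\sin y)$, $H(x+iy)=n\exp(\iota\tfrac{y}{n})$, the injectivity and non-vanishing hypotheses that drive (the relevant part of) Theorem \ref{potential_result} -- $H$ injective in $y$ on $(-\pi n,\pi n)$, $G$ injective in $x$, $\partial_y H_2\ne 0$ on $\R$ -- and concludes that the induced slice map is an injective immersion. You instead factor $\phi_n=f\circ\Psi$ through the already-proved parameterization $f$ of Proposition \ref{radice}, with $\Psi(x+Iy)=\sinh^{-1}(\exp x)+Iy$ an explicit slice-preserving diffeomorphism of the tube onto the half-tube; this transports injectivity, immersivity and the diffeomorphism property for free, and it has the further merit of making the ``different structure'' clause concrete (the transition map $f^{-1}\circ\phi_n=\Psi$ is slice preserving but not slice regular, since its stem violates the Cauchy--Riemann equations), a clause for which the paper offers no argument. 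Both approaches are sound; the paper's is uniform with the rest of Section \ref{sec:log}, yours is shorter and more structural. One point deserves emphasis: you correctly noticed that, as literally printed, the source of $\phi_n$ cannot be $\R^+\times\bS(-\pi n,\pi n)$, since then $|\exp(x+Iy)|=\exp x>1$ while $\mathscr Q_\bK^+(n)=f(\R^+\times\bS(-\pi n,\pi n))$ contains points whose first component has any modulus in $(0,\infty)$; surjectivity forces the domain to be $\R\times\bS(-\pi n,\pi n)$, exactly as the exponential $E:\bK\to\mathscr E_\bK^+=f(\bK^+)$ of the logarithm manifold is defined for all real parts. Your reading is the intended one, and your repair of this slip is an improvement on the statement rather than a gap in your argument.
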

\begin{proof}
The proof replicates the one used to establish Proposition \ref{radice}
\end{proof}

We will now define the hypercomplex $n$-th root on the $n$-th root manifold,  and establish some of its properties.

\begin{definition}Let $n\in \mathbb N$, with $n>1$ and let $\mathscr Q_\bK^+(n)$ be the  $n$-th root manifold.
The  $n$-th root
\[
R_n : \bK \times \bK \supset \mathscr Q_\bK^+(n) \to \bK
\]
is defined as follows, for all $r\in \R^+$ and $p\in \bS (-\pi n, \pi n)$:
\[
R_n(r\exp p, n\exp(\frac{p}{n}) )=\sqrt[n]{r} \exp(\frac{p}{n})
\]
or directly (and equivalently), for all $(q, s)\in  \mathscr Q_\bK^+(n)$, by
\[
R_n(q, s) =\sqrt[n]{|q|}\ \frac{s}{n}\,.
\]
Indeed, this last formulation of the definition extends in a natural fashion, to $\overline{\mathscr Q_\bK^+(n)}=f((\R^+\cup \{0\})\times \bS[-\pi n,\pi n ])$ as
\[
R_n(0, s)=0
\]
and
\[
R_n(r, -n)=-\sqrt[n] r
\]
for all $s\in nS^3$ and all $r\geq 0$.
\end{definition}
As stated in Proposition \ref{different structure}, and analogously to what happens in the case of the logaritm, the definition of the $n$-th root function is not referred  to the structure of hypercomplex Riemann manifold defined on $\mathscr Q_\bK(n)$ in Proposition \ref{radice}. Indeed, the structure that is naturally involved with the $n$-th root functions is the one defined in Proposition \ref{different structure}.

As it clearly appears, there is natural space and interest for the study of differential geometry of hypercomplex Riemann  manifolds and, in particular, for the study of their curvature, of their mean curvature and minimality. This will be the subject of a forthcoming paper.

\end{document}